\newtheorem{defn}{Definition}[section]
\newtheorem{theo}[defn]{Theorem}
\newtheorem{lem}[defn]{Lemma}
\newtheorem{prop}[defn]{Proposition}
\newtheorem{rem}[defn]{Remark}
\newenvironment{proof}{{\bf Proof }}{{\vskip 0.1cm \hfill$\Box$}}
\begin{document} 

\noindent
{\Large \bf Cholesky decomposition and well-posedness of Cauchy problem for Fokker-Planck equations with unbounded coefficients}
\\ \\
\bigskip
\noindent
{\bf Haesung Lee}  \\
\noindent
{\bf Abstract.} This paper explores the well-posedness of the Cauchy problem for the Fokker-Planck equation associated with the partial differential operator $L$ with low regularity condition. To address uniqueness, we apply a recently developed superposition principle for unbounded coefficients, which reduces the uniqueness problem for the Fokker–Planck equation to the uniqueness of solutions to the martingale problem. Using the Cholesky decomposition algorithm, a standard tool in numerical linear algebra, we construct a lower triangular matrix of functions $\sigma$ with suitable regularity such that $A = \sigma \sigma^T$. This formulation allows us to connect the uniqueness of solutions to the martingale problem with the uniqueness of weak solutions to It\^{o}-SDEs. For existence, we rely on established results concerning sub-Markovian semigroups, which enable us to confirm the existence of solutions to the Fokker–Planck equation under general growth conditions expressed as inequalities. Additionally, by imposing further growth conditions on the coefficients, also expressed as inequalities, we establish the ergodicity of the solutions. This work demonstrates the interplay between stochastic analysis and numerical linear algebra in addressing problems related to partial differential equations. \\ \\
\noindent
{Mathematics Subject Classification (2020): {Primary: 35Q84, 35K15, 47D07, Secondary: 35A02, 93D05, 15A23}}\\

\noindent 
{Keywords: Fokker-Planck equations, diffusion equations, inequalities, semigroups, martingale problems, Cholesky decomposition
}

\section{Introduction} 
This paper explores the existence and uniqueness of solutions to the Fokker-Planck equation, which is an important topic across various fields, including the theory of partial differential equations, stochastic analysis (cf. \cite{BKRS15, P14}), and recently, in generative models that have been actively studied in artificial intelligence (cf. \cite{A82, SS21}). To describe the Fokker-Planck equation, we begin with the following  local regularity condition:  \\ \\
{\bf (H-1):} {\it $d \geq 2$, $\mathbf{G} \in L^p_{\text{loc}}(\mathbb{R}^d, \mathbb{R}^d)$ with $p \in (d, \infty)$, and $A = (a_{ij})_{1 \leq i,j \leq d}$ is a symmetric matrix of functions such that $a_{ij} \in H^{1,p}_{\text{loc}}(\mathbb{R}^d) \cap C(\mathbb{R}^d)$ for all $1 \leq i,j \leq d$. Moreover, $A$ is locally uniformly strictly elliptic on $\mathbb{R}^d$, i.e. for each open ball $B$, there exist strictly positive constants $\lambda_B$ and $\Lambda_B$ such that  
    \[
    \lambda_B \| \xi \|^2 \leq \langle A(x) \xi, \xi \rangle \leq \Lambda_B \|\xi \|^2 \quad \text{for all } x \in B \text{ and } \xi \in \mathbb{R}^d.
    \]  
$L$ is a partial differential operator on $C^2(\mathbb{R}^d)$ defined as  
\[
Lf = \frac{1}{2} \text{\rm trace}(A \nabla^2 f) + \langle \mathbf{G}, \nabla f \rangle, \quad f \in C^2(\mathbb{R}^d).
\]  
}
\centerline{}
In Definition \ref{defnofoker}, we present a definition of solution to the Cauchy problem for the Fokker–Planck equation associated with $L$ and the initial distribution $\delta_x$, which is also well-explained in the Introduction of \cite{BRS21} and \cite[Proposition 6.1.2]{BKRS15}.
If the coefficients of $L$ are smooth and satisfy mild growth conditions expressed as inequalities, then the existence and uniqueness of solutions to the Cauchy problem for the Fokker–Planck equation associated with $L$ are well-known, as established in \cite[Theorem 4.1]{P14}.
Indeed, the results mentioned in \cite[Theorem 4.1]{P14} ensure the twice continuous differentiability of the solution's density, which allow the solution to be considered as a classical solution. However, when the coefficients are non-smooth, such classical solutions may not be expected. Consequently, solutions defined in the distributional sense, as introduced in Definition \ref{defnofoker}, are required. This area of research has been extensively studied by Bogachev, Krylov, R\"{o}ckner, and Shaposhnikov, with comprehensive coverage in their monograph \cite{BKRS15}. Specifically, it is shown in \cite[Theorems 6.6.4, 9.4.6]{BKRS15} that if the components of $A$ are locally Lipschitz continuous, $\mathbf{G} \in L^{p+2}_{\text{loc}}(\mathbb{R}^d, \mathbb{R}^d)$ with $p \in (d, \infty)$, and certain growth conditions on the coefficients, expressed through Lyapunov functions, are satisfied, then the existence and uniqueness of the Fokker-Planck equation associated with $L$ and initial distribution $\delta_x$ hold.
\\
One of the most important aspects of their argument in \cite{BKRS15} is the use of the fact that the solution to the Fokker-Planck equation is absolutely continuous with respect to the Lebesgue measure, enabling us to use arguments to show that the ratio of the two densities of solutions equals one. Actually, independent of existence and uniqueness, the study of the regularity of solutions under the assumption of a priori existence of a solution has been well-established by \cite{BKR01}. Moreover, \cite[Theorem 4.1]{BKR01} shows the existence of a solution to the Cauchy problem for the Fokker–Planck equation associated with $L$ and the initial distribution $\delta_x$, under the assumption that $a_{ij} \in H^{1,p+2}_{loc}(\mathbb{R}^d)$ for all $1 \leq i,j \leq d$ and $\mathbf{G} \in L^{p+2}_{loc}(\mathbb{R}^d, \mathbb{R}^d)$, in addition to {\bf (H-1)}. Research utilizing densities with appropriate regularity to demonstrate uniqueness has also been extensively explored in \cite{BPRS07, BKS21, Sh12}. \\
Recently, several attempts using stochastic analysis have been made to approach the existence and uniqueness of solutions to Fokker-Planck equations, without relying on regularity information about the solutions. One of the most important tools for this is the superposition principle. To explain it, we first introduce the concept of the martingale problem up to a finite time $T$ in Definition \ref{martiprupto}, which is widely studied in stochastic analysis, as described in the introduction of \cite{BRS21}. The superposition principle states that, under appropriate conditions on the coefficients of $L$, if $(\nu_t)_{t \in [0,T]}$ is a solution to the Cauchy problem for the Fokker-Planck equation associated with $L$ and the initial distribution $\delta_x$, then there exists a corresponding solution $\bar{\mathbb{P}}_x$ to the martingale problem for $(L, C_0^{\infty}(\mathbb{R}^d))$ up to a finite time $T$ as in Definition \ref{martiprupto} such that 
$$
\bar{\mathbb{P}}_x ( \bar{X}_t \in E ) = \nu_t(E) \quad \text{ for any $E \in \mathcal{B}(\mathbb{R}^d)$ and $t\in [0,T]$}.
$$
This superposition principle is introduced in \cite[Theorem 2.6]{Fi08} where the coefficients of $L$ are required to be global bounded, and later \cite{Tr16} slightly relax the global bounded conditions on the coefficietns. For instance,  by using \cite[Theorem 2.6]{Fi08}  and \cite[Theorem 6.3.4]{SV06}, one can derive the following uniqueness result: if $A=\sigma \sigma^T$,  all components of $\sigma=(\sigma_{ij})_{1 \leq i,j \leq d}$ and $\mathbf{G}=(g_1, \ldots, g_d)$ are globally bounded and satisfies that for some constant $C>0$
$$
\sum_{i,j=1}^d|\sigma_{ij}(x)-\sigma_{ij}(y)|+\| \mathbf{G}(x) - \mathbf{G}(y) \| \leq C \| x- y\| \quad \text{ for all $x,y \in \mathbb{R}^d$},
$$
where $\|\cdot \|$ denotes the Euclidean norm in $\mathbb{R}^d$, then the uniqueness of solutions to the Cauchy problem for the Fokker-Planck equation associated with $L$ and the initial distribution $\delta_x$ holds (see \cite[Theorem 9.8.5]{BKRS15}). Similar uniqueness results were handled in \cite{RZ10} by showing the pathwise uniqueness. However, this type of uniqueness results derived through superposition principle and stochastic analysis do not explicitly guarantee the existence of solutions to the Fokker-Planck equations. Therefore, to establish the existence of solutions, a separate and independent discussion apart from uniqueness is required.\\
As a recent result established in \cite{LT21} (cf. \cite{LST22}), an analytic approach to stochastic differential equations has been successfully developed. As an intermediate result of this approach, under the assumption {\bf (H-1)}
it was proven in  \cite[Proposition 3.13(iii)]{LST22} (cf. \cite[Lemma 3.14(iii)]{LT21}) that there exists a sub-Markovian semigroup $(P_t)_{t>0}$ satisfying  that for each $\varphi \in C_0^{\infty}(\mathbb{R}^d)$
\[
P_t \varphi(x) - \varphi(x) = \int_0^t P_s L \varphi(x) \, ds \quad \text{for all } t \in (0, \infty) \text{ and } x \in \mathbb{R}^d.
\]  
Furthermore, by establishing additional continuity of $(P_t)_{t>0}$ at $t=0$ as in Proposition \ref{continuprop}, the existence of a solution to the Cauchy problem for the Fokker-Planck equation with the initial distribution $\delta_x$ was demonstrated, where the details was explained in Theorem \ref{mainexisthm}. 
To establish the uniqueness, we turn our focus back to stochastic analysis based on the superposition principle. The superposition principle developed in \cite{Fi08, Tr16} imposes strict growth conditions on the coefficients. However, the more recent superposition principle \cite{BRS21} developed by Bogachev, R\"{o}ckner, and Shaposhnikov not only allows coefficients with mild growth but also permits locally unbounded drift coefficients. Now, to achieve uniqueness for solutions to the Cauchy problem for the Fokker–Planck equation associated with $L$ and the initial distribution $\delta_x$, we need several steps. First, we construct a solution $\bar{\mathbb{P}}_x$ which solves the martingale problem up to a finite time $T$ by using the superposition principle in \cite[Theorem 1.1]{BRS21}. Then, in Proposition \ref{prop12}, we extend the solution $\bar{\mathbb{P}}_x$ to a probability measure on $C([0, \infty), \mathbb{R}^d)$.
\noindent
We then show in Theorem \ref{cholesky} the existence of a matrix of functions $\sigma=(\sigma_{ij})_{1 \leq i,j \leq d}$ with $\sigma_{ij} \in H^{1,p}_{\text{loc}}(\mathbb{R}^d) \cap C(\mathbb{R}^d)$ for all $1 \leq i,j \leq d$ such that $A = \sigma \sigma^T$, where $A$ is the matrix of functions given in {\bf (H-1)}. One of the conventional approaches is to find a symmetric matrix of functions $\sigma=(\sigma_{ij})_{1 \leq i,j \leq d}$ with $A=\sigma^2$ as in \cite[Chapter 6, Lemma 1.1]{F06} (cf. \cite[Lemma 2.1]{CH97}). However, the symmetric matrix of functions $\sigma$ as described in \cite[Chapter 6, Lemma 1.1]{F06} is defined as complex-valued contour integral for a matrix of functions, and hence it is not easy to check whether $\sigma_{ij} \in H^{1,p}_{loc}(\mathbb{R}^d) \cap C(\mathbb{R}^d)$ for all $1 \leq i,j \leq d$. To resolve this, we utilize the Cholesky decomposition (\cite[Section 6.3]{AK08}), which is widely employed in numerical linear algebra. Specifically, using the Cholesky decomposition, the process of finding a unique lower triangular matrix $\sigma=(\sigma_{ij})_{1 \leq i,j \leq d}$ satisfying $A = \sigma \sigma^T$ and $\sigma_{ii}>0$ for all $i=1, \ldots, d$, proceeds algorithmically. This process is carried out sequentially in the order of the column indices of $\sigma$, and within each column, calculations are performed in the order of row indices. At each step, each $\sigma_{ij}$ is expressed in an algebraic form based on the previously computed data. Consequently, it can be shown algorithmically and inductively that $\sigma_{ij} \in H^{1,p}_{\text{loc}}(\mathbb{R}^d) \cap C(\mathbb{R}^d)$ for all $1 \leq i,j \leq d$ (see Theorem \ref{cholesky} for details). As the final step for uniqueness, we apply Ikeda-Watanabe's theorem (\cite[Chapter II, Theorem 7.1$^\prime$]{IW89}) to identify the solution to the martingale problem defined in Proposition \ref{prop12} as a weak solution to the corresponding It\^{o}-stochastic differential equation (It\^{o}-SDE). Then, by utilizing the well-established pathwise uniqueness results for It\^{o}-SDEs as in Theorem \ref{ouruniquene}, we can invoke Yamada-Watanabe's theorem (cf. \cite[Chapter 5, Proposition 3.20]{KS91}) (for original results, see \cite[Proposition 1]{YW71}) to establish uniqueness in law. This ultimately leads to the uniqueness of solutions to the Cauchy problem for the Fokker-Planck equations associated with $L$ and the initial distribution $\delta_x$. \\
Now, before presenting our main results in this paper, we additionally consider the following condition:\\ \\
{\bf (H-2):} {\it There exists a constant $K>0$ and $N_0 \in \mathbb{N}$ such that
$$
\| A(x)\| \leq K + K\|x\|^2 \ln (1+\|x\|^2) \quad \text{ for all $x \in \mathbb{R}^d$}
$$
and
\begin{align*}
\langle \mathbf{G}(x), x \rangle \leq K + K \|x\|^2 \ln (1+\|x\|^2) \quad \text{ for a.e. $x \in \mathbb{R}^d \setminus \overline{B}_{N_0}$}.
\end{align*}
(Here, $\|A \|$ denotes the operator norm of $A$, i.e. $\|A\|:=\sup \{\|A \xi\|: \|\xi \| \leq1  \}$).
}
\centerline{}
\centerline{}
\begin{theo} \label{mainresu}
Let $x \in \mathbb{R}^d$, and assume that {\bf (H-1)} and {\bf (H-2)} hold. Then, the following hold:
\begin{itemize}
\item[(i)]
There exists a family of probability measures $(\mu_t)_{ t \in [0, \infty)}$ on $\mathcal{B}(\mathbb{R}^d)$ such that $(\mu_t)_{t \in [0, \infty)}$ is a solution to the Cauchy problem for the Fokker–Planck equation associated with $L$ and the initial distribution $\delta_x$ as in Definition \ref{defnofoker}.
\item[(ii)]
$(\mu_t)_{ t \in [0, \infty)}$ in (i) is a unique solution to the Cauchy problem for the Fokker–Planck equation associated with $L$ and the initial distribution $\delta_x$ in the following sense: Let $T>0$ be arbitrarily given. Then, if a family of probability measures $(\nu_t)_{ t \in [0, T]}$ on $\mathcal{B}(\mathbb{R}^d)$ is a solution to the Cauchy problem for the Fokker–Planck equation associated with $L$ and the initial distribution $\delta_x$ as in Definition \ref{defnofoker}, then
$$
\mu_t = \nu_t, \quad \text{ on $\mathcal{B}(\mathbb{R}^d)$} \quad \text{ for each $t \in [0,T]$}.
$$
\item[(iii)]
Let $(\mu_t)_{ t \in [0, \infty)}$ be as in (i).  If there exist constants $M>0$ and $N_0 \in \mathbb{N}$ such that either the inequality \eqref{sprininva1} or \eqref{sprininva2} holds, then there exists a probability measure $\tilde{\mu}$ on $\mathcal{B}(\mathbb{R}^d)$ such that
$$
\int_{\mathbb{R}^d} Lf d\tilde{\mu}=0 \quad \text{ for all $f \in C_0^{\infty}(\mathbb{R}^d)$}
$$
and
\begin{align*}
\lim_{t \rightarrow \infty} \mu_t(E) = \tilde{\mu}(E) \quad \text{ for any $E \in \mathcal{B}(\mathbb{R}^d)$}.
\end{align*}
\end{itemize}
\end{theo}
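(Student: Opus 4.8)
The plan is to assemble Theorem~\ref{mainresu} from the building blocks announced in the Introduction, treating existence, uniqueness, and ergodicity as three essentially separate arguments that share the same coefficient hypotheses.

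\emph{Part (i): existence.} Here I would invoke the analytic construction directly. Under \textbf{(H-1)} the sub-Markovian semigroup $(P_t)_{t>0}$ from \cite[Proposition 3.13(iii)]{LST22} satisfies the integrated Kolmogorov identity $P_t\varphi(x)-\varphi(x)=\int_0^t P_sL\varphi(x)\,ds$ for $\varphi\in C_0^\infty(\mathbb{R}^d)$. Proposition~\ref{continuprop} upgrades this with right-continuity at $t=0$, and then Theorem~\ref{mainexisthm} packages the output as a family of sub-probability measures solving the Fokker--Planck equation. The only genuine work for (i) is to check that \textbf{(H-2)} — the logarithmic growth bounds on $\|A\|$ and on $\langle\mathbf{G},x\rangle$ — is precisely the Lyapunov-type condition needed to guarantee conservativeness, i.e. that each $\mu_t$ is a probability measure rather than a sub-probability measure; the natural Lyapunov function is $V(x)=\ln(e+\|x\|^2)$ or an iterate thereof, for which $LV$ is bounded above thanks to the $\|x\|^2\ln(1+\|x\|^2)$ ceiling, ruling out explosion in finite time. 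I would state this as a lemma and feed it into Theorem~\ref{mainexisthm}.

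\emph{Part (ii): uniqueness.} This is the heart of the paper and proceeds along the chain sketched in the Introduction. Given any solution $(\nu_t)_{t\in[0,T]}$ on a finite interval, I first apply the superposition principle \cite[Theorem 1.1]{BRS21} — whose mild growth hypotheses are met by \textbf{(H-1)}--\textbf{(H-2)}, since $\mathbf{G}\in L^p_{\mathrm{loc}}$ with $p>d$ and the coefficients satisfy the logarithmic growth bound — to obtain a solution $\bar{\mathbb{P}}_x$ to the martingale problem for $(L,C_0^\infty)$ up to time $T$ with one-dimensional marginals $\nu_t$. Proposition~\ref{prop12} promotes this to a measure on $C([0,\infty),\mathbb{R}^d)$. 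Next, Theorem~\ref{cholesky} supplies a lower-triangular $\sigma=(\sigma_{ij})$ with $\sigma_{ij}\in H^{1,p}_{\mathrm{loc}}\cap C$ and $A=\sigma\sigma^T$; Ikeda--Watanabe (\cite[Chapter II, Theorem 7.1$'$]{IW89}) then realizes $\bar{\mathbb{P}}_x$ (on a possibly enlarged space, after adjoining a Brownian motion) as a weak solution of $dX_t=\sigma(X_t)\,dB_t+\mathbf{G}(X_t)\,dt$. Theorem~\ref{ouruniquene} gives pathwise uniqueness for this SDE — this is where the Sobolev regularity $\sigma_{ij}\in H^{1,p}_{\mathrm{loc}}$, $p>d$, and local strict ellipticity are used (Krylov--Röckner / Zhang-type criteria) — and Yamada--Watanabe (\cite[Chapter 5, Proposition 3.20]{KS91}) converts pathwise uniqueness into uniqueness in law. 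Hence all weak solutions have the same marginals, so $\nu_t$ is determined; comparing with the solution from (i) gives $\mu_t=\nu_t$ for $t\in[0,T]$, and since $T$ is arbitrary this identifies $(\mu_t)$ as \emph{the} solution.

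\emph{Part (iii): ergodicity.} Under the stronger growth condition \eqref{sprininva1} or \eqref{sprininva2}, the Lyapunov function now yields not merely non-explosion but a genuine drift inequality $LV\le C_1-C_2 V$ (or $LV\le -c$ outside a compact set), giving tightness of $\{\mu_t\}_{t\ge0}$ via a uniform-in-$t$ moment bound obtained by integrating the Fokker--Planck identity against $V$. Prokhorov then extracts a weakly convergent subsequence $\mu_{t_n}\to\tilde\mu$; testing the Fokker--Planck equation against $f\in C_0^\infty$ and using $t^{-1}\int_0^t\mu_s(Lf)\,ds\to 0$ (since $\mu_t(f)$ is bounded) forces $\int Lf\,d\tilde\mu=0$, so $\tilde\mu$ is infinitesimally invariant. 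To get convergence of the full net $\mu_t\to\tilde\mu$ (not just a subsequence) and setwise convergence on all $E\in\mathcal{B}(\mathbb{R}^d)$, I would use uniqueness of the infinitesimally invariant probability measure — which follows from the same uniqueness machinery of Part (ii) applied to the stationary Fokker--Planck equation, or from irreducibility of the associated semigroup — together with the Markov property of $(P_t)$ to show $\mu_t(E)=P_t\mathbbm{1}_E(x)$ converges; setwise convergence of probability measures to a limit that is itself a probability measure upgrades weak to setwise convergence by Scheffé-type reasoning once absolute continuity/density convergence is in hand.

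\emph{Main obstacle.} I expect the delicate point to be Part~(ii): verifying that \textbf{(H-1)}--\textbf{(H-2)} genuinely fall within the hypothesis range of the superposition principle \cite[Theorem 1.1]{BRS21} and simultaneously within the pathwise-uniqueness theorem for the SDE, and then carefully tracking that the measure $\bar{\mathbb{P}}_x$ produced by superposition, extended via Proposition~\ref{prop12} and reinterpreted via Ikeda--Watanabe, has marginals that are still exactly $\nu_t$ after each transformation — the bookkeeping of "which probability space, which filtration" across the superposition/Ikeda--Watanabe/Yamada--Watanabe steps is where errors creep in. The logarithmic-growth calibration in \textbf{(H-2)} is clearly chosen so that $V=\ln(e+\|x\|^2)$ is a Lyapunov function for $L$, so the analytic estimates in (i) and (iii) should be routine once that function is identified.
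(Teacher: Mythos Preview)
Your treatment of parts (i) and (ii) is essentially the paper's own argument: the semigroup $(P_t)$ from \cite{LST22}, conservativeness via the logarithmic Lyapunov function (the paper uses $g(x)=\ln(\|x\|^2\vee N_0^2)+2$, cf.\ Remark~\ref{growthcondi}(i)), and for uniqueness the chain superposition $\to$ Proposition~\ref{prop12} $\to$ Cholesky $\to$ Ikeda--Watanabe $\to$ pathwise uniqueness $\to$ Yamada--Watanabe. One point you underplay: the verification that \textbf{(H-2)} places $(\nu_t)$ inside the scope of \cite[Theorem 1.1]{BRS21} is not automatic and occupies its own lemma in the paper (Lemma~\ref{fundalem}), where one must show $\int_0^T\!\!\int \frac{\|A(y)\|+|\langle\mathbf{G}(y),y\rangle|}{1+\|y\|^2}\,\nu_t(dy)\,dt<\infty$ via \cite[Proposition 2.2]{BRS21} with $V(y)=\ln(1+\|y\|^2)$.

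Part (iii) is where you diverge, and your route has a gap. The paper does \emph{not} construct $\tilde\mu$ by tightness and Prokhorov; it simply takes $\tilde\mu=\mu/\mu(\mathbb{R}^d)$ where $\mu=\rho\,dx$ is the infinitesimally invariant measure already produced by the semigroup machinery (Theorem~\ref{mainexisthm}(ii)), shows via the Lyapunov condition \eqref{sprininva1} or \eqref{sprininva2} that $\mu$ is finite and $(P_t)$-invariant (Remark~\ref{growthcondi}(ii)), and then quotes \cite[Theorem 3.46(iv)]{LST22} for the setwise limit $P_t(x,E)\to\mu(E)/\mu(\mathbb{R}^d)$. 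Your argument, by contrast, tries to manufacture $\tilde\mu$ as a subsequential weak limit of $\mu_{t_n}$ and then show it is infinitesimally invariant via $t^{-1}\int_0^t\mu_s(Lf)\,ds\to 0$. That Ces\`aro statement is correct, but it only tells you that limits of the \emph{time averages} $\frac{1}{T}\int_0^T\mu_s\,ds$ are infinitesimally invariant, not limits of $\mu_{t_n}$ along a sequence of times; so the step ``forces $\int Lf\,d\tilde\mu=0$'' does not follow as written. Similarly, passing from ``unique invariant measure'' to convergence of the full family $\mu_t$ and then to setwise convergence requires additional input (strong Feller plus irreducibility, or an explicit density argument), which the paper gets for free from the \cite{LST22} semigroup framework rather than proving ad hoc.
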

\noindent
The proof of Theorem \ref{mainresu}(i) is described as in the one of Theorem \ref{mainexisthm}(iv) and Remark \ref{growthcondi}(i), where $\mu_t(dy):=P_t(x,dy)$. The proof of Theorem \ref{mainresu}(ii) is addressed in the one of Theorem \ref{ouruniquene}. Finally, the proof of Theorem \ref{mainresu}(iii) is presented in the one of Theorem \ref{mainexisthm}(v) and Remark \ref{growthcondi}(ii), where $\tilde{\mu}:=\frac{1}{\mu(\mathbb{R}^d)} \mu$. 
The novelty of this paper lies in combining the results of \cite{LT21, LST22, BRS21} to establish the well-posedness of Fokker–Planck equations associated with operators whose drift terms are locally unbounded and not weakly differentiable, which are cases that could not be covered by \cite{Fi08} and \cite{Tr16}.  In addition, we establish the ergodicity of solutions, which further strengthens the applicability of our results to future developments in applied areas such as MCMC algorithms \cite{HHS05, MCF15, SS21}, especially in contexts involving more general operators and target measures described by limiting distributions.
\\
The structure of this paper is as follows. In Section \ref{basicnota}, we introduce the fundamental notations that will be used throughout the paper. Section \ref{exisergo} focuses on establishing the existence of solutions to the Cauchy problem for the Fokker-Planck equations associated with $L$ and the initial distribution $\delta_x$. In Section \ref{unibystoc}, we address the uniqueness of solutions by employing Cholesky decomposition and stochastic analysis. Finally, Section \ref{condis} concludes the paper with a brief discussion.

\section{Basic notations and definitions} \label{basicnota}
\noindent
In this study, we work within the Euclidean space $\mathbb{R}^d$ for $d \geq 2$, equipped with the standard Euclidean inner product $\langle \cdot, \cdot \rangle$ and the corresponding Euclidean norm $\|\cdot\|$. An open ball centered at $x_0 \in \mathbb{R}^d$ with radius $r > 0$ is defined as $B_R(x_0) := \{x \in \mathbb{R}^d : \|x - x_0\| < R\}$. For real numbers $a, b \in \mathbb{R}$, the notations $a \wedge b := \min(a, b)$ and $a \vee b := \max(a, b)$ are used. Let $U$ be an open subset of $\mathbb{R}^d$. The notation $\mathcal{B}(U)$ denotes the set of all Borel measurable sets or functions on $U$, as appropriate. The Lebesgue measure on $\mathcal{B}(\mathbb{R}^d)$ is denoted by $dx$. For fixed $x \in \mathbb{R}^d$, $\delta_x$ denotes the dirac delta measure on $\mathcal{B}(\mathbb{R}^d)$ centered at $x$. For a subset $\mathcal{A} \subset \mathcal{B}(U)$, the subset $\mathcal{A}_0$ consists of functions $f \in \mathcal{A}$ such that $\text{supp}(f \cdot dx)$ is compact and contained in $U$. The set of continuous functions on $U$ and its closure $\overline{U}$ are denoted by $C(U)$ and $C(\overline{U})$, respectively. Additionally, $C(U)_0$ are denoted as $C_0(U)$. For $k \in \mathbb{N} \cup \{\infty\}$, the space of $k$-times continuously differentiable functions on $U$ is denoted by $C^k(U)$, and $C^k_0(U) := C^k(U) \cap C_0(U)$. \\
Let $r \in [1, \infty]$. The $L^r$-space on $U$ with respect to a measure $\nu$ is denoted by $L^r(U, \nu)$, equipped with the standard $L^r(U, \nu)$-norm. Similarly, $L^r(U, \mathbb{R}^d, \nu)$ denotes the space of $L^r$-vector fields on $U$ with norm $\|\mathbf{F}\|_{L^r(U, \nu)} := \big\|\|\mathbf{F}\|\big\|_{L^r(U, \nu)}$. For localized $L^r$-spaces, $L^r_{\text{loc}}(U, \nu)$ denotes the set of all Borel measurable functions $f$ such that $f|_{W} \in L^r(W, \nu)$ for any bounded open set $W \subset \mathbb{R}^d$ with $\overline{W} \subset U$. The set of all vector fields $\bold{F}$ on $U$ satisfying $\|\mathbf{F}\| \in L^r_{\text{loc}}(U, \nu)$ is denoted by $L^r_{\text{loc}}(U, \mathbb{R}^d, \nu)$.\\
For simplicity, the following notations are used:
\begin{itemize}
\item $L^r(U) := L^r(U, dx)$,
\item $L^r_{\text{loc}}(U) := L^r_{\text{loc}}(U, dx)$,
\item $L^r(U, \mathbb{R}^d) := L^r(U, \mathbb{R}^d, dx)$,
\item $L^r_{\text{loc}}(U, \mathbb{R}^d) := L^r_{\text{loc}}(U, \mathbb{R}^d, dx)$.
\end{itemize}
The weak spatial derivative of a function $f$ with respect to the $i$-th coordinate is denoted by $\partial_i f$, provided it exists. The weak time derivative of $f$ is denoted by $\partial_t f$. Sobolev spaces are defined as follows:
\begin{itemize}
\item $H^{1,r}(U)$: The space of all functions $f \in L^r(U)$ with $\partial_i f \in L^r(U)$ for each $i = 1, \ldots, d$, equipped with the standard $H^{1,r}(U)$-norm
\item $H^{1,r}_{loc}(U)$: The set of all functions $f$ such that $f|_{W} \in H^{1,r}(W)$ for any bounded open set $W \subset \mathbb{R}^d$ with $\overline{W} \subset U$
\end{itemize}
The weak Laplacian is defined as $\Delta f := \sum_{i=1}^d \partial_i \partial_i f$, and for a twice weakly differentiable function $f$, the weak Hessian matrix is $\nabla^2 f := (\partial_i \partial_j f)_{1 \leq i, j \leq d}$. Let $B = (b_{ij})_{1 \leq i, j \leq d}$ be a (possibly non-symmetric)
matrix of functions and define $\text{trace}(B) := \sum_{i=1}^d b_{ii}$. Therefore, by the commutativity of the differential operator, we obtain that for each twice weakly differentiable function $f$ and $\tilde{B}=(\tilde{b}_{ij})_{1 \leq i,j \leq d}:=\frac{1}{2}(B+B^T)$,
$$
\text{trace}({B}\nabla^2 f) = \sum_{i,j=1}^d b_{ij} \partial_i \partial_j f = \sum_{i,j=1}^d \tilde{b}_{ij} \partial_i \partial_j f =\text{trace}(\tilde{B}\nabla^2 f).
$$

\begin{defn} \label{defnofoker}
Let $x \in \mathbb{R}^d$, $T \in (0, \infty)$, and assume that {\bf (H-1)} holds. A family of probability measures $(\mu_t)_{t \in [0,T]}$ on $\mathcal{B}(\mathbb{R}^d)$ is called a solution to the Cauchy problem for the Fokker–Planck equation associated with $L$ and the initial distribution $\delta_x$ if the following properties (i)--(iii) are satisfied:
\begin{itemize}
\item[(i)] 
$\mu_0 = \delta_x$, where $\delta_x$ is the Dirac delta measure on $\mathcal{B}(\mathbb{R}^d)$ centered at $x$.
\item[(ii)] 
For each $f \in C_b(\mathbb{R}^d)$, the map $\ell_f: [0,T] \to \mathbb{R}$ defined by
$$
\ell_f(t) := \int_{\mathbb{R}^d} f(y) \mu_t(dy), \quad t \in [0,T],
$$
is continuous on $[0,T]$.
\item[(iii)]
$\mathbf{G} \in L^1(B \times (0,t), \mu_s ds)$ for any open ball $B$ and $t \in [0, T]$ and
\begin{equation} \label{fpkide1}
\int_{\mathbb{R}^d} \varphi \, d\mu_t = \varphi(x) + \int_0^t \int_{\mathbb{R}^d} L\varphi(y) \, \mu_s(dy) \, ds \quad \text{ for all $t \in [0, T]$ and $\varphi \in C_0^{\infty}(\mathbb{R}^d)$}.
\end{equation}
\end{itemize}
By \cite[Proposition 6.1.2]{BKRS15}, under the assumption that (i)--(ii) hold, \eqref{fpkide1} is equivalent to
$$
\iint_{\mathbb{R}^d \times (0, T)} \big(\partial_t \varphi + L \varphi\big)\, d\mu_s \, ds = 0 \quad \text{ for all $\varphi \in C_0^{\infty}(\mathbb{R}^d \times (0, T))$}.
$$
Additionally, a family of probability measures $(\mu_t)_{t \in [0, \infty)}$ on $\mathcal{B}(\mathbb{R}^d)$ is called a solution to the Cauchy problem for the Fokker–Planck equation associated with $L$ and the initial distribution $\delta_x$ if $(\mu_t)_{t\in [0, \infty)}$ satisfies (i)--(iii), where $[0,T]$ is replaced by $[0, \infty)$.
\end{defn}

\begin{defn} \label{martiprupto}
Let $x \in \mathbb{R}^d$, $T \in (0, \infty)$, and assume that {\bf (H-1)} holds. Let $\bar{\Omega}_T := C([0, T], \mathbb{R}^d)$ with the standard supremum norm on $[0, T]$. For each $t \in [0, T]$ and $\bar{\omega} \in \bar{\Omega}_T$, let $\bar{X}_t(\bar{\omega}) = \bar{\omega}(t)$.  
A probability measure $\bar{\mathbb{P}}_x$ on $(\bar{\Omega}_T, \mathcal{B}(\bar{\Omega}_T))$ is said to solve the martingale problem for $(L, C_0^{\infty}(\mathbb{R}^d))$ up to a finite time $T$ if the following properties hold:
\begin{itemize}
    \item[(i)] 
    $\bar{\mathbb{P}}_x \left( \bar{\omega} \in \bar{\Omega}_T : \bar{\omega}(0) = x \right) = 1$.
    \item[(ii)] 
    For each $f \in C_0^{\infty}(\mathbb{R}^d)$,
    \[
    \left( f(\bar{X}_{t}) - f(x) - \int_0^t Lf (\bar{X}_s) \, ds \right)_{t \in [0, T]}
    \]
    is a continuous martingale with respect to the measure $\bar{\mathbb{P}}_x$ and the natural filtration $\bar{\mathcal{F}}_t := \sigma (\bar{X}_s : s \in [0, t])$
    (the filtration 
   $(\bar{\mathcal{F}}_t)_{t \in [0,T]}$ and $\mathcal{B}(\bar{\Omega}_T)$
    are always considered with augmentation under the probability measure $\bar{\mathbb{P}}_x$).
\end{itemize}
\end{defn}
\centerline{}
\noindent
The following embedding result follows straightforwardly from Definition \ref{martiprupto}.  
\begin{prop} \label{prop12}
Let $x \in \mathbb{R}^d$, $T \in (0, \infty)$, and assume that {\bf (H-1)} holds. Assume that a probability measure $\bar{\mathbb{P}}_x$ on $(\bar{\Omega}_T, \mathcal{B}(\bar{\Omega}_T))$ solves the martingale problem for $(L, C_0^{\infty}(\mathbb{R}^d))$ up to a finite time $T$, as defined in Definition \ref{martiprupto}.  
Let $\tilde{\Omega} := C([0, \infty), \mathbb{R}^d)$, and for each $t \in [0, \infty)$ and $\tilde{\omega} \in \tilde{\Omega}$, define $\tilde{X}_t(\tilde{\omega}) := \tilde{\omega}(t)$.  
Let $\Theta: \bar{\Omega}_T \rightarrow \tilde{\Omega}$ be a map defined as follows: for each $\bar{\omega} \in \bar{\Omega}_T$, $\Theta(\bar{\omega})$ is a continuous function on $[0, \infty)$ satisfying  
\[
\Theta(\bar{\omega})(t) =
\begin{cases} 
\bar{\omega}(t), & t \in [0, T], \\ 
\bar{\omega}(T), & t \geq T.
\end{cases}
\]  
Define a probability measure $\tilde{\mathbb{P}}_x$ on $(\tilde{\Omega}, \mathcal{B}(\tilde{\Omega}))$ by  
\[
\tilde{\mathbb{P}}_x (\Theta(\Lambda)) := \bar{\mathbb{P}}_x(\Lambda) \quad \text{ for all } \Lambda \in \mathcal{B}(\bar{\Omega}_T).
\]  
Then, the following properties hold:  
\begin{itemize}
    \item[(i)] 
    $\tilde{\mathbb{P}}_x \left( \tilde{\omega} \in \tilde{\Omega} : \tilde{\omega}(0) = x \right) = 1$.
    \item[(ii)] 
    For each $f \in C_0^{\infty}(\mathbb{R}^d)$,  
    \[
    \left( f(\tilde{X}_{t\wedge T}) - f(x) - \int_0^{t \wedge T} Lf (\tilde{X}_s) \, ds \right)_{t \geq 0}
    \]
    is a continuous martingale with respect to the measure $\tilde{\mathbb{P}}_x$ and the natural filtration $\tilde{\mathcal{F}}_t := \sigma(\tilde{X}_s : s \in [0, t])$ (the filtration $(\tilde{\mathcal{F}}_t)_{t \geq 0}$ and $\mathcal{B}(\tilde{\Omega})$ are always considered with augmentation under the probability measure $\bar{\mathbb{P}}_x$).
\end{itemize}
\end{prop}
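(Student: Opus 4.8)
The statement of Proposition \ref{prop12} is essentially a transfer of the martingale property along the ``freeze after time $T$'' map $\Theta$, so the plan is to verify each clause by unwinding the definitions of the pushforward measure $\tilde{\mathbb{P}}_x = \bar{\mathbb{P}}_x \circ \Theta^{-1}$. First I would check that $\Theta$ is well defined and Borel measurable: the formula prescribes $\Theta(\bar\omega)$ on $[0,T]$ and as the constant $\bar\omega(T)$ on $[T,\infty)$, which is continuous at $t=T$, so $\Theta(\bar\omega) \in \tilde\Omega$; moreover $\Theta$ is continuous from $(\bar\Omega_T, \|\cdot\|_{\infty,[0,T]})$ to $(\tilde\Omega, \text{loc. unif. topology})$, hence Borel, so that $\tilde{\mathbb{P}}_x$ is a well-defined probability measure on $\mathcal{B}(\tilde\Omega)$. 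I would also record the key pointwise identity $\tilde X_t \circ \Theta = \bar X_{t\wedge T}$ for every $t\ge 0$, since $\Theta(\bar\omega)(t) = \bar\omega(t\wedge T)$; this is the engine of the whole argument.

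For clause (i), $\{\tilde\omega(0)=x\} \circ \Theta^{-1} = \{\bar\omega : \Theta(\bar\omega)(0) = x\} = \{\bar\omega(0)=x\}$, which has $\bar{\mathbb{P}}_x$-probability $1$ by Definition \ref{martiprupto}(i), so $\tilde{\mathbb{P}}_x(\tilde\omega(0)=x)=1$. For clause (ii), fix $f \in C_0^\infty(\mathbb{R}^d)$ and set, on $\bar\Omega_T$, $\bar M_t := f(\bar X_t) - f(x) - \int_0^t Lf(\bar X_s)\,ds$ for $t\in[0,T]$, which is a continuous $(\bar{\mathcal F}_t)$-martingale by hypothesis; and on $\tilde\Omega$, $\tilde M_t := f(\tilde X_{t\wedge T}) - f(x) - \int_0^{t\wedge T} Lf(\tilde X_s)\,ds$. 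Using $\tilde X_{s} \circ \Theta = \bar X_{s\wedge T}$ and the change of variables $s \mapsto s\wedge T$ in the integral, one gets $\tilde M_t \circ \Theta = \bar M_{t\wedge T}$ for all $t\ge 0$. Continuity of $t\mapsto \tilde M_t$ is immediate since $t\mapsto \tilde X_{t\wedge T}$ is continuous and $Lf$ is locally bounded (it is continuous under {\bf (H-1)}, as $a_{ij}\in C(\mathbb{R}^d)$ and $\mathbf{G}$ is only $L^p_{\rm loc}$—here I should instead note $\int_0^{t\wedge T}Lf(\tilde X_s)ds$ is absolutely continuous in $t$, which holds $\tilde{\mathbb{P}}_x$-a.s. because it holds $\bar{\mathbb{P}}_x$-a.s. and $\tilde{\mathbb{P}}_x = \bar{\mathbb{P}}_x\circ\Theta^{-1}$). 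It remains to prove the martingale identity $\tilde{\mathbb{E}}_x[\tilde M_t \mid \tilde{\mathcal F}_u] = \tilde M_u$ for $0\le u\le t$, which I would reduce to $\bar{\mathbb{E}}_x[\tilde M_t \circ \Theta \mid \Theta^{-1}\tilde{\mathcal F}_u] = \tilde M_u\circ\Theta$, i.e. $\bar{\mathbb{E}}_x[\bar M_{t\wedge T}\mid \Theta^{-1}\tilde{\mathcal F}_u] = \bar M_{u\wedge T}$. The crucial compatibility of filtrations is $\Theta^{-1}(\tilde{\mathcal F}_u) = \bar{\mathcal F}_{u\wedge T}$ (up to augmentation): indeed $\tilde{\mathcal F}_u = \sigma(\tilde X_s : s\le u)$ and $\tilde X_s\circ\Theta = \bar X_{s\wedge T}$, so $\Theta^{-1}\tilde{\mathcal F}_u = \sigma(\bar X_{s\wedge T} : s\le u) = \bar{\mathcal F}_{u\wedge T}$. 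Then the desired identity is exactly the optional stopping / sampling statement for the continuous martingale $\bar M$ at the bounded stopping times $u\wedge T \le t\wedge T$, which is standard; for $u,t$ both $\le T$ it is the plain martingale property, and for $t > T$ one uses $\bar M_{t\wedge T} = \bar M_T$.

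The main obstacle—really the only nontrivial point—is the bookkeeping around measurability and augmentation: one must be careful that the natural filtration $\tilde{\mathcal F}_u$ on $\tilde\Omega$, augmented by $\tilde{\mathbb{P}}_x$-null sets, pulls back correctly, and that $\Theta$ maps $\bar{\mathbb{P}}_x$-null sets into $\tilde{\mathbb{P}}_x$-null sets (true by definition of the pushforward) so that the augmented filtrations are genuinely matched. I would handle this by first proving everything for the \emph{non}-augmented natural filtrations via the identity $\Theta^{-1}\tilde{\mathcal F}_u^0 = \bar{\mathcal F}_{u\wedge T}^0$, then invoking the standard fact that a continuous process which is a martingale for a filtration remains one for its usual augmentation, and that conditional expectations are unaffected by augmentation. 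Everything else is a direct substitution using $\tilde X_t\circ\Theta = \bar X_{t\wedge T}$, so the proof is short once these identifications are in place.
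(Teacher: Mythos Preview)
Your proposal is correct and is exactly the natural unwinding of the definitions; the paper itself does not give a proof of this proposition, stating only that ``the following embedding result follows straightforwardly from Definition \ref{martiprupto}.'' Your identification $\tilde X_t\circ\Theta=\bar X_{t\wedge T}$, the filtration compatibility $\Theta^{-1}\tilde{\mathcal F}_u=\bar{\mathcal F}_{u\wedge T}$, and the reduction of (ii) to optional sampling for the bounded times $u\wedge T\le t\wedge T$ are precisely the routine steps the paper is leaving to the reader, and your self-correction on the continuity/integrability of $\int_0^{t\wedge T}Lf(\tilde X_s)\,ds$ (transferring it from $\bar{\mathbb P}_x$ rather than claiming $Lf$ is continuous) is the right fix given that $\mathbf{G}$ is only in $L^p_{\rm loc}$.
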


\section{Existence and ergodicity of solutions} \label{exisergo}

\begin{lem}
Let $(P_t)_{t>0}$ be a sub-Markovian semigroup as in \cite[Theorem 2.3.1]{LST22} (cf. \cite[Theorem 3.8]{LT21}). Additionally, define $P_0:=id$.
Let $B_r(z):= \{y \in \mathbb{R}^d: \|y-z\|<r \}$ be an open ball in $\mathbb{R}^d$ and fix $x_0 \in B_r(z)$. Let $(x_n, t_n)_{n \geq 1}$ be a sequence in $\mathbb{R}^d \times [0, \infty)$ such that $(x_n, t_n) \rightarrow  (x_0,0)$ as $n \rightarrow \infty$.
Then,
$$
\lim_{n \rightarrow \infty} P_{t_n}1_{B_r(z)}(x_n)=1.
$$
\end{lem}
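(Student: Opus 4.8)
The plan is to sandwich $P_{t_n}1_{B_r(z)}(x_n)$ between $1$ and a quantity tending to $1$, using a smooth cutoff inscribed in $B_r(z)$ together with the integral identity satisfied by $(P_t)_{t>0}$. First I would fix $\rho>0$ with $\overline{B_{2\rho}(x_0)}\subset B_r(z)$ (possible since $B_r(z)$ is open and $x_0\in B_r(z)$) and pick $\varphi\in C_0^\infty(\mathbb{R}^d)$ with $0\le\varphi\le 1$, $\varphi\equiv 1$ on $B_\rho(x_0)$, and $\text{supp}\,\varphi\subset B_{2\rho}(x_0)$. Then $\varphi\le 1_{B_r(z)}\le 1$ pointwise, so the sub-Markovian (hence positivity preserving) property of $(P_t)_{t>0}$, together with $P_0=id$, yields $P_{t_n}\varphi(x_n)\le P_{t_n}1_{B_r(z)}(x_n)\le P_{t_n}1(x_n)\le 1$ for every $n$. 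Hence it suffices to prove $\liminf_{n\to\infty}P_{t_n}\varphi(x_n)\ge 1$.

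Next I would invoke the integral identity (the one recalled just before {\bf (H-2)}, i.e. \cite[Proposition 3.13(iii)]{LST22}) at $x=x_n$, $t=t_n$ (trivially valid when $t_n=0$), and use positivity of $P_s$ once more to get
$$
P_{t_n}\varphi(x_n)=\varphi(x_n)+\int_0^{t_n}P_sL\varphi(x_n)\,ds\ \ge\ \varphi(x_n)-\int_0^{t_n}P_s|L\varphi|(x_n)\,ds .
$$
Since $x_n\to x_0$ and $\varphi\equiv 1$ on the neighbourhood $B_\rho(x_0)$ of $x_0$, one has $\varphi(x_n)=1$ for all large $n$, and moreover $x_n\in\overline{B}:=\overline{B_\rho(x_0)}$ for all large $n$. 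Thus the whole argument reduces to the claim
$$
\lim_{t\downarrow 0}\ \sup_{x\in\overline{B}}\ \int_0^{t}P_s|L\varphi|(x)\,ds=0 .
$$

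To establish this last claim I would split $L\varphi=h_1+h_2$ with $h_1:=\tfrac12\text{trace}(A\nabla^2\varphi)$ and $h_2:=\langle\mathbf{G},\nabla\varphi\rangle$. By {\bf (H-1)}, $h_1\in C_0(\mathbb{R}^d)$ is bounded, so $P_s|h_1|\le\|h_1\|_\infty P_s1\le\|h_1\|_\infty$ and therefore $\int_0^tP_s|h_1|(x)\,ds\le\|h_1\|_\infty\,t\to 0$ uniformly. For the $h_2$-part, again by {\bf (H-1)} we have $h_2\in L^p(\mathbb{R}^d)$ with $p>d$ and $\text{supp}\,h_2$ compact, and I would use the local regularization of $(P_t)_{t>0}$ from \cite{LT21,LST22} (the De Giorgi--Nash--Moser, resp. resolvent, estimates underlying the construction of the semigroup): a bound of the form $\sup_{x\in\overline{B}}P_s|h_2|(x)\le C(\overline{B},p)\,s^{-d/(2p)}\|h_2\|_{L^p(\mathbb{R}^d)}$. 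Since $p>d>d/2$ we have $d/(2p)<1$, so $\int_0^t\sup_{x\in\overline{B}}P_s|h_2|(x)\,ds\le C\|h_2\|_{L^p}\,t^{\,1-d/(2p)}/(1-d/(2p))\to 0$. Combining the two estimates proves the claim, hence $\liminf_{n}P_{t_n}\varphi(x_n)\ge 1$, which with the sandwich from the first paragraph gives the assertion.

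The main obstacle is precisely this last input — controlling $\int_0^{t_n}P_s|\langle\mathbf{G},\nabla\varphi\rangle|(x_n)\,ds$ uniformly for $x_n$ ranging over a fixed neighbourhood of $x_0$ — since the drift contribution to $L\varphi$ lies only in $L^p$ and is not bounded, so one genuinely needs the local $L^p$-$L^\infty_{\text{loc}}$ smoothing of the semigroup (equivalently: $\int_0^{t_0}P_s|h|(x)\,ds$ is finite and small, uniformly on compact sets, for compactly supported $h\in L^p$ with $p>d/2$, which can be read off from a local Kato-class bound or from boundedness of the resolvent $\int_0^\infty e^{-s}P_s(\cdot)\,ds$ from $L^p$ into locally bounded functions). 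Everything else in the argument is elementary manipulation of the cutoff and the sub-Markovian property.
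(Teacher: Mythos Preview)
Your cutoff-and-sandwich reduction is exactly the paper's argument: choose $\varphi\in C_0^\infty(\mathbb{R}^d)$ with $0\le\varphi\le 1_{B_r(z)}$ and $\varphi(x_0)=1$, then squeeze $P_{t_n}1_{B_r(z)}(x_n)$ between $P_{t_n}\varphi(x_n)$ and $1$. The two proofs part ways only at the step $P_{t_n}\varphi(x_n)\to 1$. The paper does this in one line by citing \cite[Lemma 2.30]{LST22} (cf.\ \cite[Proposition 3.6(iii)]{LT21}), which already gives the joint continuity of $(x,t)\mapsto P_tf(x)$ at $t=0$ for $f\in C_0^\infty(\mathbb{R}^d)$. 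You instead unpack this continuity via the integral identity $P_t\varphi=\varphi+\int_0^tP_sL\varphi\,ds$ and then have to control $\int_0^{t_n}P_s|L\varphi|(x_n)\,ds$; the bounded diffusion part is harmless, but for the drift part you need a locally-uniform-in-$x$ smoothing estimate of the type $P_s|h_2|(x)\le C\,s^{-d/(2p)}\|h_2\|_{L^p}$ (or at least a time-integrated Kato/resolvent version). That estimate is plausible within the machinery of \cite{LT21,LST22} and you correctly flag it as the crux, but note that the only estimate the present paper actually quotes, namely $\int_0^TP_t(1_B\|\mathbf{G}\|)(x)\,dt\le c_{x,p}e^T\|\mathbf{G}\|_{L^p(B,\mu)}$ from \cite[Proposition 3.13]{LST22}, does not by itself deliver the uniform $o(1)$ you need as $T\downarrow 0$. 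So your route is correct in outline and more self-contained in spirit, but it ends up re-proving (and needing a slightly sharper input for) precisely the lemma the paper simply cites; the paper's proof is therefore considerably shorter.
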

\begin{proof}
Let $s:=\|x_0-z\|$. Then, $x_0 \in \overline{B}_s(z):=\{ x \in \mathbb{R}^d: \|x-z\| \leq s \} \subset B_r(z)$. Now, choose a compactly supported smooth function $f$ on $\mathbb{R}^d$ with $\text{supp}(f) \subset B_r(z)$, $0 \leq f \leq 1$ on $\mathbb{R}^d$ and $f \equiv 1$ on $\overline{B}_s(z)$. Then, $0 \leq f \leq 1_{B_r(z)}$ on $\mathbb{R}^d$, and hence
$$
0 \leq P_{t_n} f(x_n) \leq P_{t_n} 1_{B_r(z)}(x_n) \leq 1 \quad \text{ for any $n \geq 1$}.
$$
Since $\displaystyle \lim_{n \rightarrow \infty} P_{t_n} f(x_n) = f(x_0)=1$ by \cite[Lemma 2.30]{LST22} (cf. \cite[Proposition 3.6(iii)]{LT21}), we discover that
$\displaystyle \lim_{n \rightarrow \infty} P_{t_n}1_{B_r(z)}(x_n)=1$, as desired.
\end{proof}

\begin{prop} \label{continuprop}
Under the assumption {\bf (H-1)}, 
let $(P_t)_{t>0}$ be a sub-Markovian semigroup as in \cite[Theorem 2.3.1]{LST22} (cf. \cite[Theorem 3.8]{LT21}) and additionally define $P_0:=id$.
Let $(P_t(x,dy))_{t>0}$ be a family of sub-probability measures on $\mathcal{B}(\mathbb{R}^d)$ as in \cite[Theorem 3.1]{LST22} (cf. \cite[Proposition 3.10]{LT21}) and additionally define $P_0(x,dy):=\delta_x$. Let $x_0 \in \mathbb{R}^d$ be fixed and $f \in \mathcal{B}_b(\mathbb{R}^d)$ be continuous at $x_0$. Define $v_f: \mathbb{R}^d \times [0, \infty) \rightarrow \mathbb{R}$ given by
$$
v_f(x,t):= P_t f (x) = \int_{\mathbb{R}^d} f(y) P_t(x, dy) \quad \text{ for any $(x,t) \in \mathbb{R}^d \times [0, \infty)$}.
$$
Then, for each $t_0 \in [0, \infty)$, $v_f$ is continuous at $(x_0, t_0)$.
\end{prop}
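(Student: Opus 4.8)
The plan is to verify sequential continuity: since $\mathbb{R}^d\times[0,\infty)$ is metrizable it is enough to show $v_f(x_n,t_n)\to v_f(x_0,t_0)$ whenever $(x_n,t_n)\to(x_0,t_0)$, and I would treat $t_0=0$ and $t_0>0$ separately. \emph{The case $t_0=0$.} Here $v_f(x_0,t_0)=P_0f(x_0)=f(x_0)$. Given $\varepsilon>0$, continuity of $f$ at $x_0$ provides $r>0$ with $|f(y)-f(x_0)|<\varepsilon$ on $B_r(x_0)$. Splitting $v_f(x_n,t_n)=\int_{\mathbb{R}^d}f\,dP_{t_n}(x_n,\cdot)$ over $B_r(x_0)$ and its complement and using sub-Markovianity ($P_{t_n}1(x_n)\le 1$, so $P_{t_n}(x_n,B_r(x_0)^c)\le 1-P_{t_n}1_{B_r(x_0)}(x_n)$) yields
\[
|v_f(x_n,t_n)-f(x_0)|\ \le\ \varepsilon+\bigl(|f(x_0)|+\|f\|_{\infty}\bigr)\bigl(1-P_{t_n}1_{B_r(x_0)}(x_n)\bigr).
\]
By the preceding Lemma applied with $z:=x_0$, $P_{t_n}1_{B_r(x_0)}(x_n)\to 1$, so $\limsup_n|v_f(x_n,t_n)-f(x_0)|\le\varepsilon$; letting $\varepsilon\downarrow0$ closes this case.

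\emph{The case $t_0>0$.} I would first dispose of the pointwise-continuity hypothesis on $f$ by smoothing: since $(P_t)_{t>0}$ sends bounded Borel functions to continuous (hence to $C_b(\mathbb{R}^d)$) functions by the regularity of its transition kernel (the strong Feller property; cf. \cite{LT21,LST22}), the function $h:=P_{t_0/2}f$ lies in $C_b(\mathbb{R}^d)$, and by the semigroup property $v_f(x,t)=P_{t-t_0/2}h(x)=v_h(x,t-t_0/2)$ for $t\ge t_0/2$. As $(x_n,t_n-t_0/2)\to(x_0,s_0)$ with $s_0:=t_0/2>0$, it suffices to prove that $v_h$ is continuous at $(x_0,s_0)$ for every $h\in C_b(\mathbb{R}^d)$ and $s_0>0$. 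Writing $|v_h(x_n,s_n)-v_h(x_0,s_0)|\le|P_{s_n}h(x_n)-P_{s_0}h(x_n)|+|P_{s_0}h(x_n)-P_{s_0}h(x_0)|$, the second term tends to $0$ because $P_{s_0}h\in C_b(\mathbb{R}^d)$, and for the first, with $u_n:=s_n\wedge s_0$, the semigroup property gives
\[
|P_{s_n}h(x_n)-P_{s_0}h(x_n)|=\left|\int_{\mathbb{R}^d}\bigl(P_{|s_n-s_0|}h(y)-h(y)\bigr)\,P_{u_n}(x_n,dy)\right|,
\]
whose integrand is bounded by $2\|h\|_{\infty}$ and, by the case $t_0=0$ already proved (applied to the everywhere-continuous $h$), tends to $0$ pointwise, while $u_n\to s_0>0$.

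The main obstacle is the final step: bounded pointwise convergence of the integrands does not by itself force $\int(P_{|s_n-s_0|}h-h)\,dP_{u_n}(x_n,\cdot)\to0$, since the sub-probability measures $P_{u_n}(x_n,\cdot)$ move with $n$. I would close this gap using tightness of the family $\{\,P_u(x,\cdot):x\in\overline{B}_1(x_0),\ u\in[s_0/2,s_0]\,\}$ (available from the local heat-kernel estimates in \cite{LT21,LST22}, or from a Lyapunov-function argument), which reduces the task to proving $\sup_{y\in K}|P_\rho h(y)-h(y)|\to0$ as $\rho\downarrow0$ on compact sets $K$; approximating $h\in C_b(\mathbb{R}^d)$ locally uniformly by $\varphi\in C_0^{\infty}(\mathbb{R}^d)$ (again using tightness to handle the far tails) reduces this to $h=\varphi\in C_0^{\infty}(\mathbb{R}^d)$, where the identity $P_\rho\varphi(y)-\varphi(y)=\int_0^\rho P_sL\varphi(y)\,ds$, the fact that $L\varphi\in L^p(\mathbb{R}^d)$ with compact support, and an $L^p(\mathbb{R}^d)$--$L^{\infty}(K)$ smoothing bound for $(P_s)_{s>0}$ whose singularity at $s=0$ is integrable precisely because $p>d$, together give $\sup_{y\in K}|P_\rho\varphi(y)-\varphi(y)|\to0$. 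Alternatively, one could bypass the tightness argument by invoking from \cite{LT21,LST22} the joint continuity of the transition density $(t,x,y)\mapsto p_t(x,y)$ on $(0,\infty)\times\mathbb{R}^d\times\mathbb{R}^d$ together with locally uniform integrable upper bounds, which makes $v_f(x,t)=\int_{\mathbb{R}^d}f(y)p_t(x,y)\,dy$ continuous at $(x_0,t_0)$ for every $f\in\mathcal{B}_b(\mathbb{R}^d)$ and $t_0>0$ directly by dominated convergence.
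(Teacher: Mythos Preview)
Your treatment of the case $t_0=0$ is essentially identical to the paper's: the same $\varepsilon$--$\delta$ splitting over $B_r(x_0)$ and its complement, the same use of sub-Markovianity, and the same appeal to the preceding Lemma to conclude $P_{t_n}1_{B_r(x_0)}(x_n)\to1$. The constants in front of $\bigl(1-P_{t_n}1_{B_r(x_0)}(x_n)\bigr)$ differ slightly but this is immaterial.

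For $t_0>0$ the paper does something much shorter: it simply invokes \cite[Theorem~2.3.1]{LST22} (cf.\ \cite[Theorem~3.8]{LT21}), which already furnishes the joint continuity of $(x,t)\mapsto P_tf(x)$ on $\mathbb{R}^d\times(0,\infty)$ for bounded Borel $f$, so only the boundary case $t_0=0$ requires any work. Your main route---reducing to $h=P_{t_0/2}f\in C_b(\mathbb{R}^d)$ via the strong Feller property, then handling the time increment through tightness of $\{P_u(x,\cdot)\}$ and an $L^p$--$L^\infty$ smoothing bound with integrable short-time singularity---is a plausible reconstruction of what lies behind such a theorem, but it imports several side claims (uniform tightness of the kernels under \textbf{(H-1)} alone, the precise form and integrability of the smoothing estimate) that you do not verify and that are not entirely trivial for $\mathbf{G}\in L^p_{\mathrm{loc}}$. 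Your closing ``alternatively'' sentence, appealing directly to regularity results for $(P_t)_{t>0}$ from \cite{LT21,LST22}, is in fact exactly what the paper does; that is the clean path here.
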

\begin{proof}
By \cite[Theorem 2.3.1]{LST22} (cf. \cite[Theorem 3.8]{LT21}), it is enough to show that $v_f$ is continuous at $(x_0, 0)$. Let $(x_n, t_n)_{n \geq 1}$ be a sequence in $\mathbb{R}^d \times [0, \infty)$ such that $(x_n, t_n) \rightarrow (x_0, 0)$ as $n \rightarrow \infty$. Now, let $\varepsilon>0$ be given.
Since $f$ is continuous at $x_0$, there exists $\delta>0$ such that $|f(y)-f(x_0)|<\varepsilon$ for any $y \in B_{\delta}(x_0)$. Then, by the sub-Markovian property of $(P_t)_{t>0}$, we have
\begin{align*}
&|P_{t_n} f(x_n)-f(x_0) | \leq\left| \int_{\mathbb{R}^d} f(y) P_{t_n}(x_n, dy) -\int_{\mathbb{R}^d} f(x_0) P_{t_n}(x_n, dy) 		\right|+\Big |f(x_0) P_{t_n}(x_n, \mathbb{R}^d)-f(x_0)\Big|   \\
& \leq \left| \int_{\mathbb{R}^d \setminus B_{\delta}(x_0)}| f(y)-f(x_0) |P_{t_n}(x_n, dy) \right| + \left| \int_{ B_{\delta}(x_0)}| f(y)-f(x_0) |P_{t_n}(x_n, dy) \right| +|f(x_0)| (1-P_{t_n}1_{\mathbb{R}^d}(x_n)) \\
& \leq 2 \|f\|_{L^{\infty}(\mathbb{R}^d)} P_{t_n}(x_n,\mathbb{R}^d \setminus B_{\delta}(x_0)) + \varepsilon P_{t_n}(x_n, B_{\delta}(x_0)) +\|f\|_{L^{\infty}(\mathbb{R}^d)} (1-P_{t_n}1_{B_{\delta}(x_0)}(x_n)) \\
& \leq2 \|f\|_{L^{\infty}(\mathbb{R}^d)} \left( 1-P_{t_n}1_{B_{\delta}(x_0)}(x_n)\right) + \varepsilon+\|f\|_{L^{\infty}(\mathbb{R}^d)} (1-P_{t_n}1_{B_{\delta}(x_0)}(x_n)).
\end{align*}
Therefore, by Proposition \ref{continuprop}, $\limsup_{n \rightarrow \infty} |P_{t_n} f(x_n)-f(x_0) | \leq \varepsilon $, and since $\varepsilon>0$ is arbitrarily chosen, the assertion follows.
\end{proof}

\begin{theo} \label{mainexisthm}
Let $x \in \mathbb{R}^d$, and assume that {\bf (H-1)} holds. 
Let $(P_t)_{t>0}$ be a sub-Markovian semigroup as in \cite[Theorem 2.3.1]{LST22} (cf. \cite[Theorem 3.8]{LT21}) and  $(P_t(x,dy))_{t>0}$ be a family of sub-probability measures on $\mathcal{B}(\mathbb{R}^d)$ as in \cite[Theorem 3.1]{LST22} (cf. \cite[Proposition 3.10]{LT21}). Define $P_0(x, dy) := \delta_x$. Then, the following holds:
\begin{itemize}
\item[(i)]
$\mathbf{G} \in L^1(B \times (0,T), P_t(x, dy) dt)$ for any open ball $B$ and $T \in (0, \infty)$, and
\begin{equation*} \label{fpkidesemi}
\int_{\mathbb{R}^d} \varphi(y) \, P_t(x,dy) = \varphi(x) + \int_0^t \int_{\mathbb{R}^d} L\varphi(y) \, P_s(x, dy) \, ds \quad \text{for all $t \in [0, \infty)$ and $\varphi \in C_0^{\infty}(\mathbb{R}^d)$}.
\end{equation*}
\item[(ii)]
There exists $\mu=\rho dx$ with $ \rho \in H^{1,p}_{loc}(\mathbb{R}^d) \cap C(\mathbb{R}^d)$ and $\rho(x)>0$ for any $x \in \mathbb{R}^d$
such that
\begin{equation} \label{infiniesim}
\int_{\mathbb{R}^d} L \varphi \,d\mu =0 \quad \text{ for all $\varphi \in C_0^{\infty}(\mathbb{R}^d)$}.
\end{equation}
Moreover, for each $t \in (0, \infty)$, there exists $p_t(x, \cdot) \in L^{\infty}(\mathbb{R}^d)$ such that
$$
P_t(x, dy) = p_t(x,y) \rho(y) dy. \quad 
$$
\item[(iii)] $P_t(x, E)>0$ for any $t \in (0, \infty)$ and $E \in \mathcal{B}(\mathbb{R}^d)$ with $dx(E)>0$.
\item[(iv)] For each $f \in C_b(\mathbb{R}^d)$, the map $\ell_f: [0,\infty) \to \mathbb{R}$ defined by
$$
\ell_f(t) := \int_{\mathbb{R}^d} f(y) P_t(x,dy), \quad t \in [0,\infty),
$$
is continuous on $[0,\infty)$.
Moreover, assume that $(P_t)_{t>0}$ is conservative, i.e. $P_t 1_{\mathbb{R}^d} = 1$ on $\mathbb{R}^d$ for any $t>0$ (cf. Remark \ref{growthcondi}(i)). Then,
$(P_t(x, dy))_{t \geq 0}$ is a family of probability measures on $\mathcal{B}(\mathbb{R}^d)$, which is a solution to the Fokker-Planck equations associated with $L$ and the initial distribution $\delta_x$ as in Definition \ref{defnofoker}.
\item[(v)] Assume that $\mu$ in (ii) is finite and an invariant measure for $(P_t)_{t>0}$ (cf. Remark \ref{growthcondi}(ii)), i.e.
$$
\mu(E)=\int_{\mathbb{R}^d}P_t 1_E \,d\mu \quad \text{ for any $E \in \mathcal{B}(\mathbb{R}^d)$}.
$$
Then, $(P_t)_{t>0}$ is conservative and for each $E \in \mathcal{B}(\mathbb{R}^d)$
\begin{equation} \label{longtimeinf}
\lim_{t \rightarrow \infty} P_t(x, E) = \frac{\mu(E)}{\mu(\mathbb{R}^d)}.
\end{equation}
\item[(vi)] Assume that $\mu$ in (ii) is finite and $(P_t)_{t>0}$ is conservative. Then,  for each $E \in \mathcal{B}(\mathbb{R}^d)$, \eqref{longtimeinf} holds.
\end{itemize}
\end{theo}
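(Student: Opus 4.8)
The plan is to handle (i)--(vi) roughly in order, with the hard analytic input imported from \cite{LST22} (cf.\ \cite{LT21}) and the remaining work being probabilistic/ergodic bookkeeping. For (i), I would start from the identity $P_t\varphi(x)-\varphi(x)=\int_0^t P_sL\varphi(x)\,ds$ for $\varphi\in C_0^\infty(\mathbb{R}^d)$, already available from \cite[Proposition 3.13(iii)]{LST22}, and rewrite $P_s\psi(x)=\int_{\mathbb{R}^d}\psi\,dP_s(x,\cdot)$ using the kernels; the only new point is the joint integrability $\mathbf{G}\in L^1(B\times(0,T),P_s(x,dy)\,ds)$, which either follows from the local estimates of \cite{LST22} or is re-derived by splitting $L\varphi$ into its bounded, compactly supported second-order part and the term $\langle\mathbf{G},\nabla\varphi\rangle$, writing $P_s(x,dy)=p_s(x,y)\rho(y)\,dy$ as in (ii), using $\rho\in H^{1,p}_{\mathrm{loc}}\subset L^\infty_{\mathrm{loc}}$, and applying Hölder with exponent $p/(p-1)$ against the bound $\int_0^T\|p_s(x,\cdot)\|_{L^{p/(p-1)}(B)}\,ds<\infty$ (which holds because $p>d$). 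Item (ii) I would import directly: the existence of $\rho\in H^{1,p}_{\mathrm{loc}}(\mathbb{R}^d)\cap C(\mathbb{R}^d)$ with $\rho>0$ and $\int L\varphi\,\rho\,dx=0$, together with the representation $P_t(x,dy)=p_t(x,y)\rho(y)\,dy$, $p_t(x,\cdot)\in L^\infty(\mathbb{R}^d)$, are \cite[Theorem 3.1]{LST22} (cf.\ \cite[Proposition 3.10]{LT21}) and the infinitesimal-invariance results quoted there.

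Item (iii) is then immediate: $P_t(x,E)=\int_E p_t(x,y)\rho(y)\,dy$ by (ii) and $\rho>0$ everywhere, so $P_t(x,E)>0$ whenever $dx(E)>0$, the strict positivity of $p_t(x,\cdot)$ a.e.\ being a consequence of the parabolic Harnack inequality valid under {\bf (H-1)} (after rewriting the second-order part of $L$ in divergence form, the lower-order coefficients then lying in $L^p_{\mathrm{loc}}$ with $p>d$) and in any case recorded in \cite{LST22}. For (iv), fix $x$ and take $f\in C_b(\mathbb{R}^d)$; since $f$ is continuous at $x$, Proposition \ref{continuprop} gives that $\ell_f(\cdot)=v_f(x,\cdot)$ is continuous at every $t_0\in[0,\infty)$, which is Definition \ref{defnofoker}(ii) on $[0,\infty)$. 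If $(P_t)_{t>0}$ is conservative, then $P_t(x,\mathbb{R}^d)=P_t1_{\mathbb{R}^d}(x)=1$ for $t>0$ and $P_0(x,\mathbb{R}^d)=1$, so each $P_t(x,dy)$ is a probability measure; with $P_0(x,dy)=\delta_x$ (Definition \ref{defnofoker}(i)) and (i) (Definition \ref{defnofoker}(iii)), this proves (iv).

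For (v) I split into two steps. First (conservativeness on all of $\mathbb{R}^d$): invariance of $\mu$ applied to $E=\mathbb{R}^d$ gives $\int_{\mathbb{R}^d}(1-P_t1_{\mathbb{R}^d})\,d\mu=0$, and since the integrand is $\ge0$ and $\mu=\rho\,dx$ with $\rho>0$, $P_t1_{\mathbb{R}^d}=1$ Lebesgue-a.e.\ for each $t>0$; then by Chapman--Kolmogorov and $P_{t/2}(x,\cdot)\ll dx$ from (ii),
$$
P_t1_{\mathbb{R}^d}(x)=\int_{\mathbb{R}^d}P_{t/2}1_{\mathbb{R}^d}(y)\,P_{t/2}(x,dy)=\int_{\mathbb{R}^d}1\,P_{t/2}(x,dy)=P_{t/2}1_{\mathbb{R}^d}(x),
$$
so $h(t):=P_t1_{\mathbb{R}^d}(x)$ satisfies $h(t)=h(t/2^n)$ for all $n$; since $h$ is continuous at $0$ with $h(0)=1$ (Proposition \ref{continuprop} with $f=1_{\mathbb{R}^d}$), letting $n\to\infty$ forces $h\equiv1$. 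Second (ergodicity): with $\bar\mu:=\mu/\mu(\mathbb{R}^d)$ an invariant probability measure for the now-conservative Markov semigroup, (ii) gives $P_t(x,\cdot)\ll\bar\mu$ and (iii) gives $\bar\mu\ll P_t(x,\cdot)$, so all $P_t(x,\cdot)$ with $t>0$ are equivalent to $\bar\mu$; Doob's ergodic theorem (uniqueness of the invariant probability measure and total-variation convergence of $P_t(x,\cdot)$ to it; cf.\ the long-time results in \cite{BKRS15}) then gives $\lim_{t\to\infty}P_t(x,E)=\bar\mu(E)=\mu(E)/\mu(\mathbb{R}^d)$ for every $E$, i.e.\ \eqref{longtimeinf}.

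Finally, (vi) reduces to (v) once one checks that a finite, infinitesimally invariant $\mu$ is in fact invariant when $(P_t)$ is conservative: since $\mu$ is sub-invariant for $(P_t)_{t>0}$ (part of the construction in \cite{LST22}), applying $\int P_tf\,d\mu\le\int f\,d\mu$ to both $f$ and $1-f$ for $0\le f\le1$ and using $\int P_t1\,d\mu=\mu(\mathbb{R}^d)$ (conservativeness) forces $\int P_tf\,d\mu=\int f\,d\mu$, hence $\mu(E)=\int P_t1_E\,d\mu$ for all $E$, and (v) applies. The main obstacle will be the ergodicity step in (v): extracting the actual limit from the structural data (finite invariant $\mu$, strictly positive transition densities) requires a Doob-type ergodic theorem, since neither the $L^1(\mu)$-contraction nor weak convergence alone yields setwise (equivalently total-variation) convergence. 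The only other nontrivial technical point is the near-$s=0$ heat-kernel $L^{p/(p-1)}$-estimate behind (i); the rest is citation from \cite{LST22,LT21} or routine bookkeeping with Proposition \ref{continuprop} and Definition \ref{defnofoker}.
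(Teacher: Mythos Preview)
Your proposal is correct and follows essentially the same outline as the paper's proof. The paper is terser: for (ii), (iii), (v) it simply cites \cite[Section~2.2, Proposition~2.39(i), Theorem~3.46(iv)]{LST22}, and for (vi) it cites \cite[Remark~2.13]{LST22} for the sub-invariance-to-invariance upgrade you spell out. Your explicit arguments for conservativeness in (v) (via $\int(1-P_t1)\,d\mu=0$, absolute continuity, Chapman--Kolmogorov, and continuity at $t=0$) and the Doob-type ergodic step are precisely what lies behind those citations, so substantively there is no difference.

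The one place where the paper's route is genuinely simpler is (i): rather than the H\"older argument with a near-$s=0$ heat-kernel $L^{p/(p-1)}$ bound that you flag as a potential obstacle, the paper uses directly the $L^p(\mu)$-to-pointwise regularizing estimate from \cite[Proposition~3.13]{LST22}, namely $\int_0^T P_t(1_B\|\mathbf{G}\|)(x)\,dt \le c_{x,p}\,e^T\,\|\mathbf{G}\|_{L^p(B,\mu)}$, which handles the small-time behavior automatically and gives the $L^1$ integrability in one line. Your worry there is thus unfounded once one uses the right estimate from \cite{LST22}.
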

\begin{proof}
(i) First, let $B$ be an open ball in $\mathbb{R}^d$ and $T \in (0, \infty)$. Then, by \cite[Proposition 3.13]{LST22},
\begin{align*}
\int_0^T \int_{B} \|\mathbf{G}(y)\| P_t(x,dy) dt &= \int_0^T \int_{\mathbb{R}^d} \|1_{B}(y)\mathbf{G}(y)\| P_t(x,dy) dt  \\
&= \int_0^T P_t (1_{B} \|\mathbf{G}\|) (x)dt \leq c_{x,p} e^T\| \mathbf{G} \|_{L^p(B, \mu)},
\end{align*}
where $c_{x,p}>0$ is a constant which only depends on $x$ and $p$.
Moreover, it follows from \cite[Proposition 3.13(iii)]{LST22} (cf. \cite[Lemma 3.14(iii)]{LT21}) that,  for each $t \in [0,\infty)$ and $\varphi \in C_0^{\infty}(\mathbb{R}^d)$, we have
$$
\int_{\mathbb{R}^d} \varphi(y)P_t(x,dy) - \varphi(x)=P_t \varphi(x) - \varphi(x) = \int_{0}^t P_s L \varphi (x)ds = \int_0^t \int_{\mathbb{R}^d} L \varphi (y) P_s(x, dy) ds,
$$
and hence (i) follows. \\ \\
(ii) Indeed, $(P_t)_{t>0}$ is a sub-Markovian $C_0$-semigroup of contractions on $L^1(\mathbb{R}^d, \mu)$, where $\mu=\rho dx$ with $ \rho \in H^{1,p}_{loc}(\mathbb{R}^d) \cap C(\mathbb{R}^d)$ and $\rho(x)>0$ for all $x \in \mathbb{R}^d$ and $\mu$ satisfies \eqref{infiniesim} (see \cite[Section 2.2]{LST22}). In particular, $P_t(x,dy) \ll \mu$, and hence there exists $p_t(x, \cdot) \in L^1(\mathbb{R}^d, \mu)$ such that $P_t(x,dy)=p_t(x, y) \mu(dy)$.
Moreover, for any $f \in L^1(\mathbb{R}^d, \mu)$ and $t>0$
$$
\left| \int_{\mathbb{R}^d} f(y) p_t(x, y) \mu(dy)  \right |=\left|\int_{\mathbb{R}^d}f(y) P_t(x, dy) \right|=|P_t f(x)| 
\leq K_{t,x} \|f\|_{L^1(\mathbb{R}^d, \mu)}.
$$
Thus, from the Riesz representation theorem, $p_t(x, \cdot) \in L^{\infty}(\mathbb{R}^d, \mu)$ and $\|p_t(x, \cdot) \|_{L^{\infty}(\mathbb{R}^d, \mu)} \leq K_{t,x}$. \\ \\
(iii) It follows from \cite[Proposition 2.39(i)]{LST22} (cf. \cite[Corollary 4.8(ii)]{LT21}). \\ \\
(iv) Let $f \in C_b(\mathbb{R}^d)$. The continuity of $\ell_f$ on $(0, \infty)$ follows from \cite[Theorem 2.3.1]{LST22} (cf. \cite[Theorem 3.8]{LT21}). The continuity of $\ell_f$ at $0$ holds by Proposion \ref{continuprop}. If $(P_t)_{t>0}$ is conservative, then $P_t(x, \mathbb{R}^d)=P_t1_{\mathbb{R}^d}(x)=1$ for all $t>0$, and hence the assertion follows from Theorem \ref{mainexisthm}(i).
\\ \\
(v) It holds by \cite[Theorem 3.46(iv)]{LST22}.\\  \\
(vi) Since $\mu$ is finite, the conservativeness of $(P_t)_{t>0}$ implies that $\mu$ is an invariant measure for $(P_t)_{t>0}$ by \cite[Remark 2.13]{LST22}. Thus, the assertion follows from (v).
\end{proof}

\begin{rem} \label{growthcondi}
\begin{itemize}
\item[(i)]
If there exist constants $M>0$ and $N_0 \in \mathbb{N}$ and a function $g\in C^2(\mathbb{R}^d \setminus \overline{B}_{N_0}) \cap C(\mathbb{R}^d)$ such that
\begin{equation} \label{lapunoconser}
 L g\leq M g \quad \text{ for a.e. $\mathbb{R}^d\setminus \overline{B}_{N_0}$} \quad \text{ and } \quad \lim_{\|x\| \rightarrow \infty} g(x) = \infty,
\end{equation}
then it follows from \cite[Lemma 3.26, Corollary 3.23]{LST22} that $(P_t)_{t>0}$ in Theorem \ref{mainexisthm} is conservative. As a typical example for $g$ satisfying the above conditions, one can choose a function $g(x):= \ln ( \|x\|^2 \vee N_0^2  )+2$, $x \in \mathbb{R}^d$. Thus, if
\begin{equation} \label{sprincons}
-\frac{\langle A(x)x, x \rangle}{\|x\|^2} + \frac{1}{2} \mathrm{trace} A(x) + \langle \mathbf{G}(x), x \rangle \leq M \|x\|^2 \left(\ln \|x\| + 1 \right) \quad \text{ for a.e. $\mathbb{R}^d\setminus \overline{B}_{N_0}$},
\end{equation}
then \eqref{lapunoconser} is satisfied, so that $(P_t)_{t>0}$ is conservative. It can be directly verified that if {\bf (H-2)} holds, then \eqref{sprincons} is fulfilled.

\item[(ii)]
If there exist constants $M>0$ and $N_0 \in \mathbb{N}$ and a function $g\in C^2(\mathbb{R}^d \setminus \overline{B}_{N_0}) \cap C(\mathbb{R}^d)$ such that
\begin{equation} \label{lapunocfinin}
 L g\leq -M \quad \text{ for a.e. $\mathbb{R}^d\setminus \overline{B}_{N_0}$} \quad \text{ and } \quad \lim_{\|x\| \rightarrow \infty} g (x)= \infty,
\end{equation}
then it follows from \cite[Lemma 3.26]{LST22} that $\mu$ in Theorem \ref{mainexisthm}(iii) is finite and an invariant measure for $(P_t)_{t>0}$.
As a typical example for $g$ satisfying above, one can also choose a function $g(x):= \frac12 \ln ( \|x\|^2 \vee N_0^2  )+1$, $x \in \mathbb{R}^d$, or alternatively,
$g(x)=\frac12 \|x\|^2$, $x \in \mathbb{R}^d$. Then, if either
\begin{equation} \label{sprininva1}
-\frac{\langle A(x)x, x \rangle}{\|x\|^2} + \frac{1}{2} \mathrm{trace} A(x) + \langle \mathbf{G}(x), x \rangle \leq -M \|x\|^2 \quad \text{ for a.e. $\mathbb{R}^d\setminus B_{N_0}$},
\end{equation}
or
\begin{equation} \label{sprininva2}
\frac{1}{2} \mathrm{trace} A(x) + \langle \mathbf{G}(x), x \rangle \leq -M \quad \text{ for a.e. $\mathbb{R}^d\setminus B_{N_0}$},
\end{equation}
holds, then \eqref{lapunocfinin} is satisfied, so that $\mu$ in Theorem \ref{mainexisthm}(iii) is finite and an invariant measure for $(P_t)_{t>0}$.
\end{itemize}
\end{rem}
\centerline{}
\noindent
The following result presents a peculiar property related to the growth of diffusion coefficients when $d=2$.
\begin{prop} \label{dim2growthm}
Under the assumption {\bf (H-1)} with $d=2$, write
$
A:=\begin{pmatrix}
a_{11} & a_{12} \\
a_{12} & a_{22}
\end{pmatrix}
$.
Let $(P_t)_{t>0}$, $(P_t(x,dy))_{t \geq 0}$ and $\mu=\rho dx$ be as in Theorem \ref{mainexisthm}.
Then, the following hold:
\begin{itemize}
\item[(i)]
If there exist constants $M>0$ and $N_0 \in \mathbb{N}$ such that
\begin{equation*} 
\frac{|a_{11}-a_{22}|}{2} +|a_{12}|+ \langle \mathbf{G}(x), x \rangle \leq M \|x\|^2 \left(\ln \|x\| + 1 \right) \quad \text{ for a.e. $\mathbb{R}^d\setminus \overline{B}_{N_0}$},
\end{equation*}
then $(P_t)_{t>0}$ as in Theorem \ref{mainexisthm} is conservative, and hence by Theorem \ref{mainexisthm}(iv), for each $x \in \mathbb{R}^d$, $(P_t(x, dy))_{t \geq 0}$ as in Theorem \ref{mainexisthm} is a solution to the Fokker-Planck equations for $L$ with the initial distribution $\delta_x$ as in Definition \ref{defnofoker}.
\item[(ii)]
If there exist constants $M>0$ and $N_0 \in \mathbb{N}$ such that
\begin{equation*} 
\frac{|a_{11}-a_{22}|}{2} +|a_{12}|+ \langle \mathbf{G}(x), x \rangle \leq -M \|x\|^2 \quad \text{ for a.e. $\mathbb{R}^d\setminus B_{N_0}$},
\end{equation*}
then $\mu$ in Theorem \ref{mainexisthm} is finite and an invariant measure for $(P_t)_{t>0}$, so that for each $x \in \mathbb{R}^d$ and $E \in \mathcal{B}(\mathbb{R}^d)$
$$
\lim_{t \rightarrow \infty} P_t(x, E) = \frac{\mu(E)}{\mu(\mathbb{R}^d)}.
$$
\end{itemize}
\end{prop}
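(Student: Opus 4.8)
The plan is to derive both parts from the sufficient conditions already isolated in Remark~\ref{growthcondi}, so that everything comes down to one elementary $2\times2$ linear-algebra estimate. Recall that Remark~\ref{growthcondi}(i) shows \eqref{sprincons} forces $(P_t)_{t>0}$ to be conservative, and Remark~\ref{growthcondi}(ii) shows \eqref{sprininva1} forces $\mu$ to be finite and an invariant measure for $(P_t)_{t>0}$; granted these, the remaining assertions of (i) and (ii) are exactly Theorem~\ref{mainexisthm}(iv) and Theorem~\ref{mainexisthm}(v). Hence it suffices to prove that, when $d=2$, the hypothesis of (i) implies \eqref{sprincons} and the hypothesis of (ii) implies \eqref{sprininva1}.

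First I would establish the pointwise bound
\[
\frac12\,\mathrm{trace}\,A(x)-\frac{\langle A(x)x,x\rangle}{\|x\|^2}\;\le\;\frac{|a_{11}(x)-a_{22}(x)|}{2}+|a_{12}(x)|
\qquad\text{for } x=(x_1,x_2)\in\mathbb{R}^2\setminus\{0\}.
\]
This is a short computation: since $\mathrm{trace}\,A=a_{11}+a_{22}$ and $\langle A(x)x,x\rangle=a_{11}x_1^2+2a_{12}x_1x_2+a_{22}x_2^2$, the left-hand side equals $\big[(a_{11}-a_{22})(x_2^2-x_1^2)-4a_{12}x_1x_2\big]/(2\|x\|^2)$, and then $|x_2^2-x_1^2|\le\|x\|^2$ together with $2|x_1x_2|\le\|x\|^2$ gives the bound (equivalently, diagonalising $A(x)$ this quantity is $-\tfrac12(\lambda_1-\lambda_2)\cos 2\theta$, of modulus at most $\tfrac12|\lambda_1-\lambda_2|\le\tfrac{|a_{11}-a_{22}|}{2}+|a_{12}|$).

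With this inequality in hand I would add $\langle\mathbf{G}(x),x\rangle$ to both sides and insert the hypothesis. For (i), on $\mathbb{R}^2\setminus\overline{B}_{N_0}$ this yields
\[
-\frac{\langle A(x)x,x\rangle}{\|x\|^2}+\frac12\,\mathrm{trace}\,A(x)+\langle\mathbf{G}(x),x\rangle\;\le\;\frac{|a_{11}(x)-a_{22}(x)|}{2}+|a_{12}(x)|+\langle\mathbf{G}(x),x\rangle\;\le\;M\|x\|^2(\ln\|x\|+1),
\]
which is precisely \eqref{sprincons}; then Remark~\ref{growthcondi}(i) gives conservativeness of $(P_t)_{t>0}$ and Theorem~\ref{mainexisthm}(iv) finishes part (i). For (ii) the same manipulation on $\mathbb{R}^2\setminus B_{N_0}$ produces \eqref{sprininva1}, so by Remark~\ref{growthcondi}(ii) the measure $\mu$ is finite and invariant and the long-time convergence is Theorem~\ref{mainexisthm}(v). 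I do not expect a genuine obstacle here — the proof is the displayed inequality plus bookkeeping; the only point deserving a moment's attention is that $\langle A(x)x,x\rangle/\|x\|^2$ is defined only for $x\neq 0$, which is harmless because the relevant hypotheses are imposed only outside $\overline{B}_{N_0}$ (resp. $B_{N_0}$), where $\|x\|\ge N_0\ge 1$.
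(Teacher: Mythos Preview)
Your proposal is correct and follows the same overall architecture as the paper: establish the pointwise inequality
\[
\frac12\,\mathrm{trace}\,A(x)-\frac{\langle A(x)x,x\rangle}{\|x\|^2}\le\frac{|a_{11}-a_{22}|}{2}+|a_{12}|,
\]
add $\langle\mathbf{G}(x),x\rangle$, and feed the result into Remark~\ref{growthcondi}(i)/(ii) and Theorem~\ref{mainexisthm}(iv)/(v).

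The one point of contrast is how the key inequality is obtained. The paper diagonalises $A=Q^TDQ$, invokes an external reference (\cite[Corollary 3.28]{LST22}) for the bound $\tfrac12\,\mathrm{trace}\,A-\langle Ax,x\rangle/\|x\|^2\le\tfrac12|\Psi_1-\Psi_2|$, and then computes $|\Psi_1-\Psi_2|=\sqrt{(a_{11}-a_{22})^2+4a_{12}^2}\le|a_{11}-a_{22}|+2|a_{12}|$ via the trace/determinant identities. Your direct expansion $\big[(a_{11}-a_{22})(x_2^2-x_1^2)-4a_{12}x_1x_2\big]/(2\|x\|^2)$ together with $|x_2^2-x_1^2|\le\|x\|^2$ and $2|x_1x_2|\le\|x\|^2$ is a more elementary and self-contained route to the same bound, avoiding both the external citation and the eigenvalue machinery; the paper's version, on the other hand, makes transparent that the quantity in question is exactly $\tfrac12|\lambda_1-\lambda_2|$ times a cosine, which is conceptually cleaner and hints at what the analogous estimate in higher dimensions would look like.
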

\begin{proof}
(i) Since $A$ is symmetric and strictly positive definite, by elementary linear algebra, there exist a diagonal matrix of functions $D=\begin{pmatrix}
\Psi_1 & 0 \\
0 & \Psi_2
\end{pmatrix}$ and orthonormal matrix of functions $Q$ such that
$$
A = Q^TDQ \quad \text{ for all $x \in \mathbb{R}^d$}.
$$
Then, by the proof of \cite[Corollary 3.28]{LST22},
\begin{equation} \label{growthdif}
-\frac{\langle A(x)x, x \rangle}{\|x\|^2} + \frac{1}{2} \text{trace}A(x) \leq  \frac{|\Psi_1(x) - \Psi_2(x)|}{2} \quad \text{ for all $x \in \mathbb{R}^d$}.
\end{equation}
Meanwhile, from the elementary linear algebra, 
$$
\Psi_1+\Psi_2=\text{trace}D=\text{trace}A=a_{11}+a_{22} \quad \text{ on $\mathbb{R}^d$}
$$ 
and
$$
\Psi_1 \Psi_2 =\det D = \det A = a_{11} a_{22}-a_{12}^2  \quad \text{ on $\mathbb{R}^d$}.
$$
Thus, 
\begin{align}
| \Psi_1 - \Psi_2 | &= \sqrt{(\Psi_1 + \Psi_2)^2-4\Psi_1 \Psi_2} =\sqrt{ (a_{11} +a_{22})^2 -4(a_{11} a_{22} -a^2_{12}) } \nonumber \\
&=\sqrt{(a_{11}-a_{22})^2 +4 a_{12}^2} \leq |a_{11}-a_{22}| + 2|a_{12}| \quad \text{ on $\mathbb{R}^d$} \label{growth2dim}
\end{align}
Therefore, the assertion follows from \eqref{growth2dim}, \eqref{growthdif} and \eqref{sprincons}.\\ \\
(ii) Analogously, the assertion follows from \eqref{growth2dim}, \eqref{growthdif} and \eqref{sprininva1}.

\end{proof}

\begin{rem} \label{growthconrema}
As observed in the above Proposition \ref{dim2growthm}, in dimension $d = 2$, if the difference between the two diagonal coefficients of the diffusion matrix is bounded by a quadratic times logarithmic growth, the existence of a solution to the Fokker-Planck equation is guaranteed through Theorem 3.3(iv), no matter how large the growth of the individual diagonal coefficients is. This demonstrates that the existence of solutions can be discussed under much weaker growth conditions for the coefficients than those required by the condition {\bf (H-2)} in Theorem \ref{mainresu}(ii), which ensures the uniqueness of solutions to the Fokker-Planck equation. 
In fact, in the case of \eqref{sprininva1}, it can be checked that if the growth condition of $\langle \mathbf{G}(x), x \rangle$ is sufficiently negative, the diffusion coefficients can exhibit rapid growth. However, such rapid growth is not permissible under the condition {\bf (H-2)}.
Therefore, further research may be needed to generalize the growth conditions of {\bf(H-2)} to broaden the applicability of the superposition principle for solutions to the Cauchy problem for the Fokker–Planck equation associated with $L$ and the initial distribution $\delta_x$ as in Definition \ref{defnofoker} (cf. \cite[Question 8]{BRS23}).
\end{rem}

\section{Uniqueness of solutions by stochastic analysis} \label{unibystoc}
\begin{theo} \label{cholesky}
Let $A=(a_{ij})_{1 \leq i,j \leq d}$ be a symmetric matrix of functions as in {\bf (H-1)}. Then, there exists a (unique) lower triangular matrix of functions $\sigma=(\sigma_{ij})_{1 \leq i,j \leq d}$ with $\sigma_{ij} \in H^{1,p}_{loc}(\mathbb{R}^d) \cap C(\mathbb{R}^d)$ and $\sigma_{ii}>0$ for all $1 \leq i,j \leq d$ such that
$$
A=\sigma \sigma^T \quad \text{ on $\mathbb{R}^d$}
$$
\end{theo}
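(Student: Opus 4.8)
## Proof Proposal

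The plan is to run the Cholesky decomposition algorithm directly on the matrix of functions $A$ and track, inductively, that every entry produced lies in $H^{1,p}_{\text{loc}}(\mathbb{R}^d) \cap C(\mathbb{R}^d)$. Recall the standard recursion: processing columns $j = 1, \dots, d$ in order, one sets
\[
\sigma_{jj} = \left( a_{jj} - \sum_{k=1}^{j-1} \sigma_{jk}^2 \right)^{1/2}, \qquad
\sigma_{ij} = \frac{1}{\sigma_{jj}}\left( a_{ij} - \sum_{k=1}^{j-1} \sigma_{ik}\sigma_{jk} \right) \quad (i > j),
\]
and $\sigma_{ij} = 0$ for $i < j$. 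The key structural observation is that each $\sigma_{ij}$ is obtained from the data $\{a_{i'j'}\}$ and the previously computed $\{\sigma_{i'j'} : j' < j, \text{ or } j' = j, i' < i\}$ by the operations of addition, multiplication, division by a strictly positive continuous Sobolev function, and taking the square root of a strictly positive continuous Sobolev function. So the whole proof reduces to two algebraic closure facts about the function class $\mathcal{R} := H^{1,p}_{\text{loc}}(\mathbb{R}^d) \cap C(\mathbb{R}^d)$ with $p \in (d,\infty)$, plus a verification that the arguments of the division and the square root are genuinely strictly positive on all of $\mathbb{R}^d$.

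The two closure facts are: (a) $\mathcal{R}$ is an algebra (closed under sums and products), and (b) if $u \in \mathcal{R}$ and $u > 0$ everywhere on $\mathbb{R}^d$, then $1/u \in \mathcal{R}$ and $\sqrt{u} \in \mathcal{R}$. Continuity under these operations is immediate. For the Sobolev part: on any ball $B$, $u$ is bounded and bounded below by a positive constant $c_B$ (by continuity and compactness of $\overline{B}$); products of $H^{1,p} \cap L^\infty$ functions stay in $H^{1,p}$ with the Leibniz rule, since $p > d$ gives $H^{1,p}_{\text{loc}} \hookrightarrow C$ and the weak derivatives $\partial_k(uv) = u\,\partial_k v + v\,\partial_k u$ are products of an $L^\infty$ and an $L^p$ function hence $L^p$ on $B$; for $1/u$ one uses $\partial_k(1/u) = -u^{-2}\partial_k u$, which is bounded times $L^p$ on $B$ since $u \geq c_B > 0$; for $\sqrt{u}$ one uses $\partial_k \sqrt{u} = \tfrac{1}{2} u^{-1/2}\partial_k u$, again bounded times $L^p$ on $B$ because $u^{-1/2} \leq c_B^{-1/2}$. (These chain-rule identities for Sobolev functions are standard once one knows $u$ is continuous and locally bounded away from $0$; one may also obtain them by mollification since $u > 0$.) So $\mathcal{R}$ is closed under every operation the algorithm uses, provided the radicands and denominators stay strictly positive.

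Establishing strict positivity of the pivots is the one genuinely non-algebraic point, and I expect it to be the main obstacle to state cleanly. The claim is that for each $j$, the function $p_j := a_{jj} - \sum_{k<j}\sigma_{jk}^2$ is strictly positive on $\mathbb{R}^d$. This follows from the local uniform strict ellipticity in {\bf (H-1)}: fix $x \in \mathbb{R}^d$; since the leading $j\times j$ principal submatrix $A^{(j)}(x)$ of $A(x)$ is a principal submatrix of a positive definite matrix it is positive definite, so its Cholesky decomposition over $\mathbb{R}$ exists with strictly positive pivots; and by uniqueness of the Cholesky decomposition of a fixed positive-definite matrix, the pivot $p_j(x)$ produced by our recursion coincides with the $(j,j)$ pivot of $A^{(j)}(x)$, hence $p_j(x) > 0$. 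Thus $p_j \in \mathcal{R}$ and $p_j > 0$ everywhere, so $\sigma_{jj} = \sqrt{p_j} \in \mathcal{R}$ with $\sigma_{jj} > 0$, and then each $\sigma_{ij}$ ($i>j$) is a well-defined element of $\mathcal{R}$ by fact (b) applied to $\sigma_{jj}$ and the algebra property. Carrying this through the induction on $j$ (and within each $j$, the trivial induction on $i$) yields the lower-triangular $\sigma \in \mathcal{R}^{d\times d}$ with $\sigma_{ii} > 0$ and $A = \sigma\sigma^T$ pointwise. Uniqueness is inherited pointwise from the classical uniqueness of the Cholesky factorization of a positive-definite matrix, since any competing $\sigma$ with the stated sign conditions must agree with ours at every $x$.
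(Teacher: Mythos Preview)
Your proposal is correct and follows essentially the same approach as the paper: run the Cholesky algorithm columnwise and verify by induction that every entry lands in $H^{1,p}_{\text{loc}}(\mathbb{R}^d)\cap C(\mathbb{R}^d)$. Your write-up is in fact more complete, since you spell out the closure of this class under products, reciprocals, and square roots of strictly positive elements, and you justify the strict positivity of each pivot $p_j$ via the positive-definiteness of the leading principal minors---points the paper's proof leaves implicit.
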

\begin{proof}
The existence and uniqueness of a lower triangular matrix of functions $\sigma=(\sigma_{ij})_{1 \leq i,j \leq d}$ such that $A=\sigma \sigma^T$ and $\sigma_{ii}>0$ for all $1 \leq i\ \leq d$ is shown as in \cite[Theorem 6.3.1]{AK08}. Now, it is enough to show that $\sigma_{ij} \in H^{1,p}_{loc}(\mathbb{R}^d) \cap C(\mathbb{R}^d)$ for all $1 \leq i,j \leq d$. Indeed, the specific calculation for the components of the lower triangular matrix of functions $\sigma$ are described as in \cite[Section 6.3.1]{AK08}. These are conducted by the columnwise computation and described as the following {\bf Algorithm 1}.
\begin{algorithm}
\caption{Cholesky Decomposition (Columnwise computation)}
\begin{algorithmic}[1]
\Require Symmetric positive-definite matrix of functions $A$ of size $d \times d$
\Ensure Lower triangular matrix of functions $\sigma$ such that $A = \sigma \sigma^T$ and the diagonal components of $\sigma$ are strictly positive.

\State Compute the first column ($j=1$):
\State $\sigma_{11} = \sqrt{a_{11}}$
\For{$i = 2$ to $d$}
    \State $\sigma_{i1} = \frac{a_{i1}}{\sigma_{11}}$
\EndFor

\For{$j = 2$ to $d$}
    \State Compute the diagonal element of column $j$:
    \begin{equation*}
        \sigma_{jj} = \sqrt{a_{jj} - \sum_{k=1}^{j-1} \sigma_{jk}^2}
    \end{equation*}
    \State Compute the off-diagonal elements of column $j$:
    \For{$i = j+1$ to $d$}
        \begin{equation*}
            \sigma_{ij} = \frac{1}{\sigma_{jj}} \left(a_{ij} - \displaystyle \sum_{k=1}^{j-1} \sigma_{jk}\sigma_{ik} \right)
        \end{equation*}
    \EndFor
\EndFor
\end{algorithmic}
\end{algorithm}
We now show that by using the {\bf Algorithm 1} $\sigma_{ij} \in H^{1,p}_{loc}(\mathbb{R}^d) \cap C(\mathbb{R}^d)$ for all $1 \leq i,j \leq d$. First, the step 1-5 in {\bf Algorithm 1} deduce that $\sigma_{i1} \in H^{1,p}_{loc} (\mathbb{R}^d) \cap C(\mathbb{R}^d)$ for all $1 \leq i \leq d$, since $a_{11} \in H^{1,p}_{loc}(\mathbb{R}^d) \cap C(\mathbb{R}^d)$ and $a_{11}(x)>0$ for all $x \in \mathbb{R}^d$. Next, suppose that there exists a column index $m$, $2 \leq m \leq d$, such that 
$\sigma_{ik} \in H^{1,p}_{loc}(\mathbb{R}^d) \cap C(\mathbb{R}^d)$  for all $1 \leq k \leq m-1$ and
$1 \leq i \leq d$. Then, by the step 7 in {\bf Algorithm 1}, 
$$
\sigma_{mm} = \sqrt{a_{mm} - \sum_{k=1}^{m-1} \sigma_{mk}^2} \in H^{1,p}_{loc}(\mathbb{R}^d) \cap C(\mathbb{R}^d)
$$
and by the step 9--10 in {\bf Algorithm 1},  
$$
\sigma_{im} = \frac{1}{\sigma_{mm}} \left(a_{im} -\sum_{k=1}^{m-1} \sigma_{mk}\sigma_{ik} \right) \in H^{1,p}_{loc}(\mathbb{R}^d) \cap C(\mathbb{R}^d) \quad \text{ for all $m+1 \leq i \leq d$}. 
$$
Since $\sigma_{im}=0$ for all $1 \leq i \leq m-1$, we obtain that
$\sigma_{im} \in H^{1,p}_{loc}(\mathbb{R}^d) \cap C(\mathbb{R}^d)$ for all $1 \leq i \leq d$.
By induction, we conclude that $\sigma_{ij} \in H^{1,p}_{loc}(\mathbb{R}^d) \cap C(\mathbb{R}^d)$ for all $1 \leq i \leq d$ and $1 \leq j \leq d$.
\end{proof}
\centerline{}
\noindent
The following result demonstrates that a solution to the martingale problem up to a finite time $T$ can be identified as a weak solution to an It\^{o}-SDE.
\begin{theo}  \label{martiweak} \rm
Let $x \in \mathbb{R}^d$, $T \in (0, \infty)$, and assume that {\bf (H-1)} holds. Let $\bar{\mathbb{P}}_x$ be a solution to the martingale problem for $(L, C_0^{\infty}(\mathbb{R}^d))$ up to a finite time $T$ as in Definition \ref{martiprupto}. 
 Let $(\tilde{\Omega}, \tilde{\mathcal{F}},  (\tilde{\mathcal{F}})_{t \geq 0}, \tilde{\mathbb{P}}_x)$ be a filtered probability space and $(\tilde{X}_t)_{t \geq 0}$ be a canonical process as in Proposition \ref{prop12} such that (i)--(ii) in Proposition \ref{prop12} hold.
  Let $\sigma$ be a lower triangular matrix of functions such that $A=\sigma \sigma^T$, $\sigma_{ii}>0$ for all $1 \leq i \leq d$ and $\sigma_{ij} \in H^{1,p}_{loc}(\mathbb{R}^d) \cap C(\mathbb{R}^d)$ for all $ 1\leq i, j\leq d$, as in Theorem \ref{cholesky}. Then, there exists an extension $(\hat{\Omega}, \hat{\mathcal{F}}, \hat{\mathbb{P}}_x, 
 (\hat{\mathcal{F}_t})_{t \geq 0}, (\hat{X}_t)_{t \geq 0})$ of $(\tilde{\Omega}, \tilde{\mathcal{F}}, \tilde{\mathbb{P}}_x, (\tilde{\mathcal{F}_t})_{t \geq 0}, (\tilde{X}_t)_{t \geq 0})$, and there exists an $(\hat{\mathcal{F}}_t)_{t \geq 0}$-standard Brownian motion $(\hat{W}_t)_{t \geq 0}$
such that 
$$
\hat{X}_t = x+ \int_0^t \sigma(\hat{X}_s) d \hat{W}_s + \int_0^t \mathbf{G}(\hat{X}_s) ds, \quad 0 \leq t \leq T, \qquad \hat{\mathbb{P}}_x\text{-a.s.}
$$
In particular, for each $0 \leq t \leq T$, we have that
$$
\bar{\mathbb{P}}_x \circ \bar{X}^{-1}_{t} =
\tilde{\mathbb{P}}_x \circ \tilde{X}^{-1}_{t} = \hat{\mathbb{P}} \circ \hat{X}^{-1}_{t}  \qquad \text{ on \, $\mathcal{B}(\mathbb{R}^d)$}.
$$
\end{theo}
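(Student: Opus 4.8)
The plan is to read off, from the martingale problem, the continuous semimartingale decomposition of the canonical process $(\tilde{X}_t)_{t \ge 0}$ of Proposition \ref{prop12}, and then to realize its martingale part as a stochastic integral against a Brownian motion via the It\^{o}-representation theorem, using the Cholesky factor $\sigma$ of Theorem \ref{cholesky}. To begin, fix $R>0$ and a cut-off $\chi_R \in C_0^{\infty}(\mathbb{R}^d)$ with $\chi_R \equiv 1$ on $B_R(0)$, and apply Proposition \ref{prop12}(ii) to the test functions $f_i^R(y) := y_i \chi_R(y)$ and $f_{ij}^R(y) := y_i y_j \chi_R(y)$, which all lie in $C_0^{\infty}(\mathbb{R}^d)$. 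On $B_R(0)$ one computes $L f_i^R = G_i$ and $L f_{ij}^R = a_{ij} + y_j G_i + y_i G_j$; stopping at $\tau_R \wedge T$ with $\tau_R := \inf\{t \ge 0 : \tilde{X}_t \notin B_R(0)\}$ and letting $R \to \infty$ then shows that, under $\tilde{\mathbb{P}}_x$,
$$
M^i_t := \tilde{X}^i_{t \wedge T} - x_i - \int_0^{t \wedge T} G_i(\tilde{X}_s)\, ds, \qquad t \ge 0,
$$
is a continuous $(\tilde{\mathcal{F}}_t)_{t \ge 0}$-local martingale for each $i$ (the drift integral is $\tilde{\mathbb{P}}_x$-a.s.\ finite since $\mathbf{G} \in L^p_{\mathrm{loc}}(\mathbb{R}^d, \mathbb{R}^d)$, which is already needed for Definition \ref{martiprupto} to be meaningful). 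Applying the product rule to $\tilde{X}^i_t \tilde{X}^j_t = f_{ij}^R(\tilde{X}_t)$ along the decomposition $\tilde{X}^i_t = x_i + M^i_t + \int_0^t G_i(\tilde{X}_s)\, ds$ and matching the finite-variation part against the martingale identity for $f_{ij}^R$, the difference is a continuous local martingale of finite variation starting at $0$, hence vanishes; this yields $\langle M^i, M^j \rangle_t = \int_0^{t \wedge T} a_{ij}(\tilde{X}_s)\, ds$ for all $1 \le i,j \le d$.

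Since $\sigma$ from Theorem \ref{cholesky} is lower triangular with $\sigma_{ii} > 0$, we have $(\det \sigma(y))^2 = \det A(y) > 0$ for every $y$ by the local uniform ellipticity in {\bf (H-1)}, so $\sigma(y)$ is invertible with continuous, locally bounded inverse, and $\langle M^i, M^j \rangle_t = \int_0^{t \wedge T} (\sigma \sigma^T)_{ij}(\tilde{X}_s)\, ds$. Hence the representation theorem for continuous local martingales, \cite[Chapter II, Theorem 7.1$^\prime$]{IW89}, applies: there is an extension $(\hat{\Omega}, \hat{\mathcal{F}}, \hat{\mathbb{P}}_x, (\hat{\mathcal{F}}_t)_{t \ge 0})$ of $(\tilde{\Omega}, \tilde{\mathcal{F}}, \tilde{\mathbb{P}}_x, (\tilde{\mathcal{F}}_t)_{t \ge 0})$ carrying the lift $(\hat{X}_t)_{t \ge 0}$ of $(\tilde{X}_t)_{t \ge 0}$ and an $(\hat{\mathcal{F}}_t)_{t \ge 0}$-standard Brownian motion $(\hat{W}_t)_{t \ge 0}$ with $M^i_t = \sum_{k=1}^d \int_0^{t \wedge T} \sigma_{ik}(\hat{X}_s)\, d\hat{W}^k_s$; equivalently,
$$
\hat{X}_t = x + \int_0^t \sigma(\hat{X}_s)\, d\hat{W}_s + \int_0^t \mathbf{G}(\hat{X}_s)\, ds, \qquad 0 \le t \le T, \qquad \hat{\mathbb{P}}_x\text{-a.s.}
$$
Here the extension only serves to furnish independent Brownian increments on $[T, \infty)$, where $M$ has already been stopped; on $[0,T]$ one may directly set $\hat{W}_t := \int_0^t \sigma^{-1}(\tilde{X}_s)\, dM_s$, which is a continuous local martingale with $\langle \hat{W}^i, \hat{W}^j \rangle_t = \delta_{ij}\,(t \wedge T)$, and then $\int_0^{t} \sigma(\hat X_s)\, d\hat W_s = M_t$ on $[0,T]$.

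Finally, because $\Theta(\bar{\omega})(t) = \bar{\omega}(t)$ for $t \in [0,T]$ in Proposition \ref{prop12}, we have $\tilde{X}_t \circ \Theta = \bar{X}_t$ there, hence $\tilde{\mathbb{P}}_x \circ \tilde{X}_t^{-1} = \bar{\mathbb{P}}_x \circ \bar{X}_t^{-1}$ on $\mathcal{B}(\mathbb{R}^d)$ for $t \in [0,T]$; and since an extension preserves the law of the canonical process, $\hat{\mathbb{P}}_x \circ \hat{X}_t^{-1} = \tilde{\mathbb{P}}_x \circ \tilde{X}_t^{-1}$ for every $t \ge 0$. Combining these gives the asserted equality of time-marginals for $0 \le t \le T$. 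The main obstacle is the first step: one must handle the non-compact support of the coordinate and product functions through the cut-off and a careful localization argument, and verify that the merely $L^p_{\mathrm{loc}}$ regularity of $\mathbf{G}$ does not obstruct the a.s.\ finiteness and predictability of the drift integrals along $\tilde{\mathbb{P}}_x$-typical paths, so that the semimartingale decomposition with $\langle M^i, M^j \rangle_t = \int_0^{t \wedge T} a_{ij}(\tilde{X}_s)\, ds$ is genuinely available; once this is secured, the remaining steps are a direct invocation of \cite[Chapter II, Theorem 7.1$^\prime$]{IW89} together with the construction in Proposition \ref{prop12}.
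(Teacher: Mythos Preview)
Your proposal is correct and follows essentially the same route as the paper's proof: extract the semimartingale decomposition of the coordinate process from the martingale problem, identify the quadratic covariation $\langle M^i, M^j\rangle_t = \int_0^{t\wedge T} a_{ij}(\tilde X_s)\,ds$, and then invoke \cite[Chapter II, Theorem 7.1$^\prime$]{IW89} to produce the Brownian motion and the It\^o representation. The only cosmetic difference is that the paper passes through general $v\in C^2(\mathbb{R}^d)$, computes $\langle M^v\rangle$ via It\^o's formula applied to $v(\tilde X_{t\wedge T})^2$ and the martingale $M^{v^2}$, and then polarizes, whereas you work directly with the cut-off coordinate and product functions $y_i\chi_R$, $y_iy_j\chi_R$ and localize by stopping at $\tau_R$; your version is in fact a spelled-out form of what the paper calls ``a simple extension to smooth functions with compact support''. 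Your additional remark that the invertibility of $\sigma$ allows one to set $\hat W_t=\int_0^t\sigma^{-1}(\tilde X_s)\,dM_s$ on $[0,T]$ (with the extension only needed to supply Brownian increments after $T$) is a nice clarification not made explicit in the paper.
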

\noindent
\begin{proof}
Let $v \in C^2(\mathbb{R}^d)$ and define
$$
M_t^v:=v(\tilde{X}_{t \wedge T})-v(x) - \int_0^{t \wedge T} Lv (\tilde{X}_s)ds, \quad 0 \leq t< \infty.
$$
Then, by Proposition \ref{prop12} and a simple extension to smooth functions with compact support, we obtain that
$(M^v_t)_{t \geq 0}$ is a local $(\tilde{\mathcal{F}}_t)_{t \geq 0}$-martingale with respect to $\tilde{\mathbb{P}}_x$. Using It\^{o}'s formula for the semimartinagle $\big(v(\tilde{X}_{t \wedge T})\big)_{t \geq 0}$, we have
$$
v(\tilde{X}_{t \wedge T})^2 -v(x)^2 = \int_0^t 2v(\tilde{X}_{s \wedge T}) dM^v_s + \int_0^t 2v Lv (\tilde{X}_{s \wedge T}) ds+ \langle  M^v \rangle_t \quad \text{ for all $t \geq 0$}.
$$
In addition, $(M_t^{v^2})_{t \geq 0}$ is a local $(\mathcal{F}_t)_{t \geq 0}$-martingale and it holds that
\begin{align*}
M_t^{v^2}&=v(\tilde{X}_{t \wedge T})^2-v(x)^2 - \int_0^{t \wedge T} Lv^2 (\tilde{X}_s)ds \\
& =v(\tilde{X}_{t \wedge T})^2-v(x)^2 - \int_0^{t \wedge T} 2vLv (\tilde{X}_s)ds - \int_0^{t \wedge T} \langle A \nabla v, \nabla v \rangle (\tilde{X}_s)ds \quad \text{ for all $t \geq 0$}.
\end{align*}
Therefore, we get 
$
\left( \langle M^v \rangle_t - \int_0^{t \wedge T} \langle A \nabla v, \nabla v \rangle(\tilde{X}_s) ds \right)_{t \geq 0}
$
is a continuous $(\tilde{\mathcal{F}}_t)_{t \geq 0}$-martingale starting at $0$ with bounded variation. Therefore,
$\langle M^v \rangle_t = \int_0^{t \wedge T} \langle A \nabla v, \nabla v \rangle (\tilde{X}_s) ds$\, for all $t \geq 0$.
Hence, by the polarization identity, for each $u,v \in C^2(\mathbb{R}^d)$ it holds that
\begin{align*}
\langle M^u, M^v \rangle_t&=\frac{1}{2} \left( \langle M^u + M^v, M^u + M^v \rangle_t - \langle M^u, M^u \rangle_t - \langle M^v, M^v \rangle_t \right) \\
&= \frac{1}{2} \left( \langle M^{u+v}, M^{u+v} \rangle_t - \langle M^u, M^u \rangle_t - \langle M^v, M^v \rangle_t \right) \\
& = \frac{1}{2} \left( \int_{0}^{t\wedge T} \langle A \nabla (u+v), \nabla (u+v) \rangle (\tilde{X}_s)ds - \int_0^{t \wedge T} \langle A \nabla u, \nabla u \rangle (\tilde{X}_s) ds - \int_{0}^{t \wedge T} \langle A \nabla v, \nabla v \rangle (\tilde{X}_s)	ds \right) \\
& = \int_{0}^{t \wedge T} \langle A \nabla u, \nabla v \rangle (\tilde{X}_s) ds =\int_{0}^{t} 1_{[0,T]}(s)\langle A \nabla u, \nabla v \rangle (\tilde{X}_s) ds \quad \text{ for all $t \geq 0$}.
\end{align*}
For each $i=1, \ldots, d$, let $u_i:=x_i$ be the $i$-th projection coordinate function. And for each $1 \leq i,j \leq d$,
define
$$
\Phi_{ij}(s):=1_{[0,T]}(s)\langle A \nabla u_i, \nabla u_j \rangle(\tilde{X}_s)=1_{[0,T]}(s) a_{ij}(\tilde{X}_s), \quad 0 \leq s< \infty,
$$
and
$$
\Psi_{ij}(s) := 1_{[0,T]}(s) \sigma_{ij}(\tilde{X}_s), \quad 0 \leq s< \infty.
$$
Then, we obtain that for each $1 \leq i,j \leq d$,
$\Phi_{ij}(s) = \sum_{k=1}^d\Psi_{ik}(s) \Psi_{jk}(s)$, for all $s \in [0, \infty)$. Moreover, we find that
$$
\int_{0}^t |\Phi(s)| ds < \infty, \;\quad \int_{0}^t |\Psi(s)|^2 ds <\infty \quad \text{ for all $t>0$}, \quad \text{$\tilde{\mathbb{P}}_x$-a.s.}
$$
and
$$
\langle M^{u_i}, M^{u_j} \rangle_t   =\int_{0}^t \Phi_{ij}(s) ds \quad \text{ for all $ t\geq 0$},\; \quad \text{$\tilde{\mathbb{P}}_x$-a.s.}
$$
According to the Ikeda-Watanabe theorem (\cite[Chapter II, Theorem 7.1$^\prime$]{IW89}), there exists an extension $(\hat{\Omega}, \hat{\mathcal{F}}, \hat{\mathbb{P}}_x, 
 (\hat{\mathcal{F}_t})_{t \geq 0}, (\hat{X}_t)_{t \geq 0})$ of $(\tilde{\Omega}, \tilde{\mathcal{F}}, \tilde{\mathbb{P}}_x, (\tilde{\mathcal{F}_t})_{t \geq 0}, (\tilde{X}_t)_{t \geq 0})$, and there exists an $(\hat{\mathcal{F}}_t)_{t \geq 0}$-standard Brownian motion $(\hat{W}_t)_{t \geq 0}=\big(\hat{W}^1_t, \ldots, \hat{W}^d_t \big)_{t \geq 0}$
such that for each $i=1, \ldots, d$,
$$
\hat{X}^i_{t \wedge T} -x_i - \int_{0}^{t \wedge T} g_i(\hat{X}_s) ds=M^{u_i}_t= \sum_{k=1}^d \int_0^t \Psi_{ik} (s) \,d\hat{W}^k_s=\sum_{k=1}^d \int_0^{t \wedge T} \sigma_{ik} (\hat{X}_s) \,d\hat{W}^k_s, \quad \forall t>0, \quad \text{$\hat{\mathbb{P}}_x$-a.s.}
$$
as desired.
\end{proof}
\centerline{}
\noindent
The following is a direct consequence of \cite[Theorem 1.1]{Z11} and straightforward localization arguments.
\begin{theo}  \label{pathwiseuni} \rm
Let $\mathbf{G} \in L^p_{loc}(\mathbb{R}^d, \mathbb{R}^d)$, and let $\tilde{\sigma} = (\tilde{\sigma}_{ij})_{1 \leq i,j \leq d}$ be a matrix of functions with $\tilde{\sigma}_{ij} \in H^{1,p}_{loc}(\mathbb{R}^d) \cap C(\mathbb{R}^d)$ such that $\tilde{\sigma} \tilde{\sigma}^T$ is locally uniformly strictly elliptic on $\mathbb{R}^d$. That is, for any open ball $B$, there exist strictly positive constants $\lambda_B$ and $\Lambda_B$ such that
$$
\lambda_{B} \| \xi \|^2 \leq \langle \tilde{\sigma}(x) \tilde{\sigma}(x)^T \xi, \xi \rangle \leq \Lambda_{B} \| \xi \|^2 \quad \text{ for all $\xi \in \mathbb{R}^d$ and $x \in B$}.
$$
Let $(\tilde{\Omega}, \tilde{\mathcal{F}}, (\tilde{\mathcal{F}}_t)_{t \geq 0}, \tilde{\mathbb{P}}_x)$ be a filtered probability space, and let
$(\tilde{W}_t)_{t \geq 0}$ be a standard $(\tilde{\mathcal{F}}_t)_{t \geq 0}$-Brownian motion. 
For each $i \in \{1, 2 \}$, let $(\tilde{X}^i_t)_{t \geq 0}$ be a stochastic process adapted to $(\tilde{\mathcal{F}}_t)_{t \geq 0}$ such that
$$
\tilde{X}^i_t = x +  \int_0^t \tilde{\sigma}(\tilde{X}^i_s) \, d \tilde{W}_s   +  \int_0^t \mathbf{G} (\tilde{X}^i_s) \, ds, \quad 0 \leq t \leq T, \quad \text{$\tilde{\mathbb{P}}_x$-a.s.}
$$
Then, 
$$
\tilde{\mathbb{P}}_x  \left(  \tilde{X}^1_t = \tilde{X}_t^2 \quad \text{for all $t \in [0,T]$}	\right) = 1.
$$
\end{theo}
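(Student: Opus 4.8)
The plan is to derive the statement from the global pathwise uniqueness theorem \cite[Theorem 1.1]{Z11} by a cut-off and localization argument, since under {\bf (H-1)} the coefficients $\tilde\sigma$ and $\mathbf G$ are only \emph{locally} bounded, locally elliptic and locally Sobolev. The first step is to localize. Because the paths $t \mapsto \tilde X^i_t$ are continuous on $[0,T]$, the stopping times $\tau_n := \inf\{ t \in [0,T] : \|\tilde X^1_t\| \vee \|\tilde X^2_t\| \ge n \} \wedge T$ satisfy $\tau_n = T$ for all $n$ large (depending on $\omega$), $\tilde{\mathbb P}_x$-a.s. Hence it suffices to prove, for each fixed $n \in \mathbb N$, that $\tilde{\mathbb P}_x\big( \tilde X^1_{t \wedge \tau_n} = \tilde X^2_{t \wedge \tau_n} \text{ for all } t \in [0,T] \big) = 1$, and then let $n \to \infty$.

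Fix $n$, let $\lambda_n,\Lambda_n>0$ be ellipticity constants for $A:=\tilde\sigma\tilde\sigma^T$ on $B_{n+1}(0)$, and choose $\chi_n \in C_0^\infty(B_{n+1}(0))$ with $0 \le \chi_n \le 1$ and $\chi_n \equiv 1$ on $\overline{B_n(0)}$. The second (and, I expect, the main) step is to replace $(\tilde\sigma,\mathbf G)$ by a globally admissible pair $(\bar\sigma_n,\bar{\mathbf G}_n)$ that is \emph{unchanged on} $\overline{B_n(0)}$. Set $\bar{\mathbf G}_n := \chi_n \mathbf G$, so that $\bar{\mathbf G}_n \in L^p(\mathbb R^d,\mathbb R^d)$ has compact support and $\bar{\mathbf G}_n = \mathbf G$ on $\overline{B_n(0)}$. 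For the diffusion coefficient, in the case relevant for this paper — $\tilde\sigma$ lower triangular with $\tilde\sigma_{ii}>0$, as produced by Theorem \ref{cholesky} — one takes $\bar\sigma_n := \chi_n\tilde\sigma + (1-\chi_n)\sqrt{\lambda_n}\, I_d$; this is again lower triangular with diagonal entries $\chi_n\tilde\sigma_{ii}+(1-\chi_n)\sqrt{\lambda_n}>0$, hence invertible at every point, each $(\bar\sigma_n)_{ij} \in H^{1,p}_{loc}(\mathbb R^d)\cap C(\mathbb R^d)$, and $\bar\sigma_n$ equals the constant $\sqrt{\lambda_n}\, I_d$ off the compact set $\mathrm{supp}\,\chi_n$, so $\sup_{\mathbb R^d}\|\bar\sigma_n\|<\infty$ and $\inf_{\mathbb R^d}|\det\bar\sigma_n|>0$, whence $\bar\sigma_n\bar\sigma_n^T$ is globally uniformly strictly elliptic and $\bar\sigma_n = \tilde\sigma$ on $\overline{B_n(0)}$. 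For a general $\tilde\sigma$, the naive convex interpolation may fail to be elliptic in the transition annulus, and one instead deforms $\tilde\sigma$ there within the fibre of square roots of a suitably extended $\bar A_n = \chi_n A + (1-\chi_n)I_d$ (equivalently, interpolating in $O(d)$, using its connectedness and matching the sign of $\det\tilde\sigma$ with the constant chosen outside) to a constant invertible matrix; I would present this as a routine modification. I expect arranging (a) $\bar\sigma_n=\tilde\sigma$ on $\overline{B_n(0)}$ — essential so that the \emph{same} driving Brownian motion is retained below — together with (b) global uniform ellipticity of $\bar\sigma_n\bar\sigma_n^T$ and (c) $H^{1,p}_{loc}\cap C$ regularity with global boundedness to be the genuinely technical point, along with checking that the precise hypotheses of \cite[Theorem 1.1]{Z11} are met by $(\bar\sigma_n,\bar{\mathbf G}_n)$.

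The third step is to invoke \cite[Theorem 1.1]{Z11}: by the previous step the pair $(\bar\sigma_n,\bar{\mathbf G}_n)$ satisfies its hypotheses, so the SDE $dU_t = \bar\sigma_n(U_t)\,d\tilde W_t + \bar{\mathbf G}_n(U_t)\,dt$, $U_0 = x$, has pathwise uniqueness. On the other hand, for $i=1,2$ and $t \in [0,T]$, since $\tilde X^i_s \in \overline{B_n(0)}$ for $s \le \tau_n$ and $(\bar\sigma_n,\bar{\mathbf G}_n)$ agrees with $(\tilde\sigma,\mathbf G)$ there,
\[
\tilde X^i_{t\wedge\tau_n} = x + \int_0^{t\wedge\tau_n} \bar\sigma_n(\tilde X^i_{s\wedge\tau_n})\, d\tilde W_s + \int_0^{t\wedge\tau_n} \bar{\mathbf G}_n(\tilde X^i_{s\wedge\tau_n})\, ds, \qquad \tilde{\mathbb P}_x\text{-a.s.},
\]
so $(\tilde X^i_{t\wedge\tau_n})_{t\in[0,T]}$ is, up to the stopping time $\tau_n$, a solution of the modified SDE driven by the \emph{common} Brownian motion $\tilde W$ and started at $x$. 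The standard fact that pathwise uniqueness for an SDE passes to solutions stopped at a stopping time then yields $\tilde X^1_{t\wedge\tau_n} = \tilde X^2_{t\wedge\tau_n}$ for all $t\in[0,T]$, $\tilde{\mathbb P}_x$-a.s.; letting $n \to \infty$ and using $\tau_n \uparrow T$ a.s. completes the proof.
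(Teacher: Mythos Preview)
Your proposal is correct and follows essentially the same route as the paper's own proof: localize via the exit times $\tau_n$ of both solutions from $B_n$, modify $(\tilde\sigma,\mathbf G)$ by a cutoff $\chi_n$ plus a constant diagonal term so that the resulting $(\bar\sigma_n,\bar{\mathbf G}_n)$ agrees with the original coefficients on $\overline{B_n}$ while satisfying the global hypotheses of \cite[Theorem 1.1]{Z11}, apply that theorem to conclude $\tilde X^1_{\cdot\wedge\tau_n}=\tilde X^2_{\cdot\wedge\tau_n}$, and let $n\to\infty$. The only cosmetic differences are that the paper uses two nested cutoffs $\chi_n,\chi_{n+1}$ and the constant $\Lambda_{B_{n+1}}^{1/2}$ rather than your single $\chi_n$ and $\sqrt{\lambda_n}$, and that you are more explicit about why the lower-triangular structure of $\tilde\sigma$ from Theorem~\ref{cholesky} guarantees uniform ellipticity of the interpolant (via positivity of the diagonal and a determinant bound), and about the potential difficulty for a general $\tilde\sigma$; the paper states the ellipticity bound for $\sigma^n$ without further justification.
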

\begin{proof}
Let $N \in \mathbb{N}$ be such that $x \in B_N$. For each $n \geq N+1$, let $\tau_n:=\inf \{t \geq 0:  \|\tilde{X}^1_t\| \geq n  \} \wedge\inf \{t \geq 0:  \|\tilde{X}^2_t\| \geq n  \}$.
For each $n \geq N+1$, let $\chi_n \in C_0^{\infty}(\mathbb{R}^d)$ be such that $0 \leq \chi_n \leq 1$ on $\mathbb{R}^d$ and $\chi_n \equiv 1$ on $\overline{B}_n$ and $\text{supp}(\chi_n) \subset B_{n+1}$. Set $\mathbf{G}_n:= \chi_n \mathbf{G}$ and for each $n \geq N+1$ and $1 \leq i,j \leq d$, define
$$
\sigma^n_{ij} := \chi_{n+1} \sigma_{ij} + \Lambda_{B_{n+1}}^{1/2} (1-\chi_n) \delta_{ij} \quad \text{ on }\, \mathbb{R}^d.
$$
Then, we have
\[
\lambda_{B_{n+1}} \| \xi \|^2 \leq   \| \sigma^n(x)^T \xi \|^2 \leq 4 \Lambda_{B_{n+1}} \| \xi \|^2.
\]
Meanwhile, observe that for each $n \geq N+1$ and $i \in \{1,2\}$
$$
\tilde{X}^i_t = x +  \int_0^t \tilde{\sigma}(\tilde{X}^i_s)  d \tilde{W}_s   +  \int_0^t \mathbf{G} (\tilde{X}^i_s) ds, \quad 0 \leq t< T \wedge \tau_n, \quad \text{$\tilde{\mathbb{P}}_x$-a.s.}
$$
By \cite[Theorem 1.1]{Z11},  for each $n \geq N+1$ it holds that
$$
\tilde{\mathbb{P}}_x  \left(  \tilde{X}^1_t= \tilde{X}_t^2 \quad \text{ for all $t \in [0, T \wedge \tau_n)$}	\right)=1.
$$
Letting $n \rightarrow \infty$ and using the left-continuity of $X^1$ and $X^2$ at $T$, the assertion follows.
\end{proof}

\noindent
The following lemma is a consequence of \cite[Proposition 2.2]{BRS21} and some calculations.
\begin{lem} \label{fundalem}
Let $x \in \mathbb{R}^d$, $T \in (0, \infty)$, and assume that {\bf (H-1)} and {\bf (H-2)} hold. Let  $(\mu_t)_{t \in [0, T]}$ be a family of probability measures on $\mathcal{B}(\mathbb{R}^d)$ such that $(\mu_t)_{t \in [0,T]}$ is a solution to the Cauchy problem for the Fokker–Planck equation associated with $L$ and the initial distribution $\delta_x$ as in Definition \ref{defnofoker}. Then,
$$
\int_0^T \int_{\mathbb{R}^d} \frac{\|A(y)\| + \langle \mathbf{G}(y), y  \rangle}{1+\|y\|^2} \mu_t(dy) dt <\infty.
$$
\end{lem}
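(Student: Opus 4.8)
The plan is to derive everything from a single Lyapunov function matched to the growth permitted by {\bf (H-2)}. I would take $V(y):=\ln(1+\|y\|^2)$, which is smooth, nonnegative, and satisfies $V(y)\to\infty$ as $\|y\|\to\infty$. A routine computation gives $\nabla V(y)=\frac{2y}{1+\|y\|^2}$ and $\nabla^2 V(y)=\frac{2}{1+\|y\|^2}I-\frac{4yy^T}{(1+\|y\|^2)^2}$, so that
$$
LV(y)=\frac{\mathrm{trace}\,A(y)}{1+\|y\|^2}-\frac{2\langle A(y)y,y\rangle}{(1+\|y\|^2)^2}+\frac{2\langle \mathbf{G}(y),y\rangle}{1+\|y\|^2}.
$$

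Since $A(y)$ is positive definite the middle term is nonpositive, and $\mathrm{trace}\,A(y)\le d\,\|A(y)\|$. Inserting the bounds of {\bf (H-2)} and using $\|y\|^2/(1+\|y\|^2)\le1$, one gets $LV(y)\le C_0\,(1+V(y))$ for a.e.\ $y\in\mathbb{R}^d\setminus\overline{B}_{N_0}$ with $C_0:=(d+2)K$, and by the same estimates the integrand in question obeys $\frac{\|A(y)\|+\langle \mathbf{G}(y),y\rangle}{1+\|y\|^2}\le 2K(1+V(y))$ for a.e.\ $y\notin\overline{B}_{N_0}$. Because $(\mu_t)_{t\in[0,T]}$ solves the Fokker--Planck equation with $\int V\,d\mu_0=\ln(1+\|x\|^2)<\infty$, the coefficients are locally $\mu_t\,dt$-integrable (Definition \ref{defnofoker}(iii) for $\mathbf{G}$, continuity of $A$ for the diffusion part), and $LV\le C_0(1+V)$ holds $\mu_t\,dt$-a.e.\ outside $\overline{B}_{N_0}$, the a priori estimate of \cite[Proposition 2.2]{BRS21} applies and yields $\sup_{t\in[0,T]}\int_{\mathbb{R}^d}V\,d\mu_t<\infty$. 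Feeding this moment bound back into \eqref{fpkide1} for (a cutoff approximation of) $V$, for which $(LV)^+\le C_0(1+V)$ is now integrable, also gives $\int_0^T\int_{\mathbb{R}^d}|LV|\,d\mu_t\,dt<\infty$.

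To finish, I would split $\mathbb{R}^d=\overline{B}_{N_0}\cup(\mathbb{R}^d\setminus\overline{B}_{N_0})$. On $\overline{B}_{N_0}$ the factor $(1+\|y\|^2)^{-1}$ is bounded, $\|A\|$ is bounded by continuity, and $|\langle \mathbf{G}(y),y\rangle|\le N_0\|\mathbf{G}(y)\|$ with $\mathbf{G}\in L^1(\overline{B}_{N_0}\times(0,T),\mu_t\,dt)$, so the contribution of this region is finite. On the complement the positive part of the integrand is dominated by $2K(1+V)$, which is $\mu_t\,dt$-integrable by the moment bound; and for the genuine integrability of the signed integrand I would use
$$
\frac{2\langle \mathbf{G}(y),y\rangle}{1+\|y\|^2}=LV(y)-\frac{\mathrm{trace}\,A(y)}{1+\|y\|^2}+\frac{2\langle A(y)y,y\rangle}{(1+\|y\|^2)^2},
$$
whose three right-hand terms all lie in $L^1(\mathbb{R}^d\times(0,T),\mu_t\,dt)$ (the first by the previous paragraph, the other two because $\|A\|\le K+K\|y\|^2\ln(1+\|y\|^2)$ together with the moment bound), so that $\frac{\langle\mathbf{G}(y),y\rangle}{1+\|y\|^2}$, and hence $\frac{\|A(y)\|+\langle\mathbf{G}(y),y\rangle}{1+\|y\|^2}$, is $\mu_t\,dt$-integrable over all of $\mathbb{R}^d$.

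The crux is the moment estimate $\sup_{t\in[0,T]}\int V\,d\mu_t<\infty$: one must check that \cite[Proposition 2.2]{BRS21} still applies although $LV$ is \emph{not} pointwise dominated by $C_0(1+V)$ on $\overline{B}_{N_0}$ (there $\mathbf{G}$ is merely locally $p$-integrable), which is exactly why the Lyapunov inequality is imposed only outside a compact set and is combined with the local $\mu_t\,dt$-integrability of the coefficients from Definition \ref{defnofoker}(iii). If one wishes to bypass the precise form of \cite[Proposition 2.2]{BRS21}, the same bound follows by testing \eqref{fpkide1} against $\phi_n(V)\,\theta_n$ with $\phi_n$ a smooth bounded concave truncation and $\theta_n$ a spatial cutoff, and then letting $n\to\infty$ via Fatou's lemma and Gronwall's inequality against the one-sided bound.
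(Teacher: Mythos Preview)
Your proposal is correct and follows essentially the same route as the paper: the same Lyapunov function $V(y)=\ln(1+\|y\|^2)$, the same appeal to \cite[Proposition 2.2]{BRS21} for the moment bound and the $L^1$-bound on $LV$, and the same rearrangement of the identity for $LV$ to extract the integrability of $\frac{\langle \mathbf{G}(y),y\rangle}{1+\|y\|^2}$. The only cosmetic difference is that the paper absorbs the $\overline{B}_{N_0}$-contribution into an auxiliary function $W(y):=\frac{K(d+4)}{N^2}+1_{\overline{B}_{N_0}}(y)\,|LV(y)|$ so as to have a \emph{global} inequality $LV\le W+\tilde{K}V$ before invoking \cite[Proposition 2.2]{BRS21} (which then delivers both \eqref{inequ1} and \eqref{inequ2} in one stroke), whereas you carry out the compact/complement split after the fact.
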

\begin{proof}
Let $V(y)=\ln (1+\|y\|^2)$, $y \in \mathbb{R}^d$. Then, 
$$LV(y) = \frac{\text{trace}A(y) }{1+\|y\|^2} - \frac{2 \langle A(y)y, y\rangle}{(1+\|y\|^2)^2}+\frac{2\langle \mathbf{G}(y), y \rangle\rangle}{1+\|y\|^2} \quad \text{ for all $y \in \mathbb{R}^d$}.
$$
Then, for a.e. $y \in \mathbb{R}^d \setminus \overline{B}_N$, it follows that
\begin{align*}
LV(y) &\leq \frac{\text{trace}A(y) }{\|y\|^2} +\frac{2 \langle A(y)y, y\rangle}{\|y\|^4}+\frac{2\langle \mathbf{G}(y), y \rangle\rangle}{1+\|y\|^2} \\
&\leq \frac{K(d+2)}{N^2}+K(d+2)\cdot V(y) + \frac{2K}{1+N^2}+{2}K \cdot V(y) \\
&{\leq} K(d+{4})V(y) + \frac{K(d+4)}{N^2}.
\end{align*}
Now define $\tilde{K}:={K(d+4)}$ and
$$
W(y):=\frac{K(d+4)}{N^2}+ 1_{\overline{B}_{N_0}}(y) |LV(y)|, \quad y \in \mathbb{R}^d.
$$
Therefore, we finally obtain that
$$
LV(y) \leq W(y)  + \tilde{K} \cdot V(y) \quad \text{ for a.e. $y \in \mathbb{R}^d$}.
$$
Note that $\displaystyle \lim_{\|y\| \rightarrow \infty} V(y) = \infty$ and that
$$
\int_{0}^T \int_{\mathbb{R}^d} W(y) \mu_t(dy) dt <\infty
$$
from the fact that $\mathbf{G} \in L^1(B \times (0,T), \mu_t dt)$. Therefore,
it follows from \cite[Proposition 2.2]{BRS21} that
\begin{align}
&\quad \int_{\mathbb{R}^d} V d\mu_t \leq \left( V(x)+\int_0^T \int_{\mathbb{R}^d} W d\mu_s ds \right) e^{\tilde{K}T} \quad \text{ for all $t \in [0,T]$}, \label{inequ1} \\
& \int_0^T \int_{\mathbb{R}^d} |LV(y)| \mu_t(dy) dt \leq 2e^{\tilde{K}T} \left(	\int_0^T \int_{\mathbb{R}^d} W d\mu_s ds  +  V(x)	\right)<\infty. \label{inequ2}
\end{align}
Observe that from {\bf (H-2)} and \eqref{inequ1},
\begin{equation} \label{intermeineq}
\int_0^T \int_{\mathbb{R}^d} \frac{\|A(y)\|}{1+\|y\|^2} \mu_t(dy) dt \leq TK+{K}\int_0^T  \int_{\mathbb{R}^d} V(y) \mu_t(dy)dt< \infty.
\end{equation}
Moreover, it follows from \eqref{inequ2} and \eqref{intermeineq} that
\begin{align*}
&\int_0^T \int_{\mathbb{R}^d}  \frac{\left|\langle \mathbf{G}(y), y \rangle\rangle \right|}{1+\|y\|^2} \mu_t(dy)dt = \int_0^T \int_{\mathbb{R}^d}   \left| \frac{1}{2}LV(y) - \frac{\text{trace}A(y)}{2(1+\|y\|^2)}+\frac{2 \langle A(y)y, y\rangle}{(1+\|y\|^2)^2} \right| \mu_t(dy)dt  \\
& \leq \frac{1}{2}\int_0^T \int_{\mathbb{R}^d}   |LV(y)| \mu_t(dy)dt + \left(  \frac{d}{2}+2 \right)\int_0^T \int_{\mathbb{R}^d}   \frac{\|A(y)\|}{1+\|y\|^2} \mu_t(dy)dt <\infty. 
\end{align*}
Hence, the proof is complete.
\end{proof}

\begin{theo} \label{ouruniquene}
Let $x \in \mathbb{R}^d$, $T \in (0, \infty)$, and assume that {\bf (H-1)} and {\bf (H-2)} hold. 
Let $(\nu_t^1)_{t \in [0,T]}$ and $(\nu_t^2)_{t \in [0,T]}$ be solutions to the Fokker-Planck equations for $L$ with initial distribution $\delta_x$. Then, for each $t \in [0,T]$, $\nu^1_t = \nu^2_t$ on $\mathcal{B}(\mathbb{R}^d)$.
\end{theo}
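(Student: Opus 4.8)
The plan is to reduce the uniqueness of the Fokker--Planck equation to uniqueness in law for the associated It\^o--SDE, passing through the martingale problem via the superposition principle of \cite{BRS21}. Fix $T>0$ and let $(\nu_t)_{t \in [0,T]}$ be any solution to the Cauchy problem for the Fokker--Planck equation associated with $L$ and $\delta_x$. First I would verify the integrability hypotheses of the superposition principle \cite[Theorem 1.1]{BRS21}: by Lemma \ref{fundalem}, under {\bf (H-1)} and {\bf (H-2)} one has $\int_0^T \int_{\mathbb{R}^d} \frac{\|A(y)\| + |\langle \mathbf{G}(y), y\rangle|}{1+\|y\|^2}\,\nu_t(dy)\,dt < \infty$, which together with $\mathbf{G} \in L^1(B \times (0,T), \nu_s\,ds)$ from Definition \ref{defnofoker}(iii) supplies the required global control on the coefficients along the solution. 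Hence there exists a probability measure $\bar{\mathbb{P}}_x$ on $C([0,T], \mathbb{R}^d)$ solving the martingale problem for $(L, C_0^{\infty}(\mathbb{R}^d))$ up to time $T$ as in Definition \ref{martiprupto}, with $\bar{\mathbb{P}}_x(\bar X_t \in E) = \nu_t(E)$ for all $E \in \mathcal{B}(\mathbb{R}^d)$ and $t \in [0,T]$.

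Next I would push this solution forward. By Proposition \ref{prop12}, $\bar{\mathbb{P}}_x$ extends to a measure $\tilde{\mathbb{P}}_x$ on $C([0,\infty), \mathbb{R}^d)$ such that $\big(f(\tilde X_{t \wedge T}) - f(x) - \int_0^{t \wedge T} Lf(\tilde X_s)\,ds\big)_{t \geq 0}$ is a martingale for every $f \in C_0^{\infty}(\mathbb{R}^d)$, with $\tilde{\mathbb{P}}_x \circ \tilde X_t^{-1} = \nu_t$ for $t \in [0,T]$. Using the Cholesky factor $\sigma$ from Theorem \ref{cholesky}, which satisfies $A = \sigma \sigma^T$, $\sigma_{ii}>0$ and $\sigma_{ij} \in H^{1,p}_{loc}(\mathbb{R}^d) \cap C(\mathbb{R}^d)$, Theorem \ref{martiweak} produces an extension of the filtered probability space carrying an $(\hat{\mathcal{F}}_t)_{t\geq 0}$-Brownian motion $\hat W$ and a process $\hat X$ with
$$
\hat X_t = x + \int_0^t \sigma(\hat X_s)\,d\hat W_s + \int_0^t \mathbf{G}(\hat X_s)\,ds, \quad 0 \leq t \leq T, \quad \hat{\mathbb{P}}_x\text{-a.s.},
$$
and with $\hat{\mathbb{P}}_x \circ \hat X_t^{-1} = \nu_t$ on $\mathcal{B}(\mathbb{R}^d)$ for $t \in [0,T]$. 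Thus any Fokker--Planck solution is realized as the family of one-dimensional time marginals of a weak solution of this single It\^o--SDE, whose coefficients meet the hypotheses of Theorem \ref{pathwiseuni}: $\tilde\sigma := \sigma$ has the stated Sobolev regularity, $\mathbf{G} \in L^p_{loc}(\mathbb{R}^d,\mathbb{R}^d)$ by {\bf (H-1)}, and $\sigma \sigma^T = A$ is locally uniformly strictly elliptic, again by {\bf (H-1)}.

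To conclude, I would invoke the Yamada--Watanabe principle (cf. \cite[Chapter 5, Proposition 3.20]{KS91}): by Theorem \ref{pathwiseuni} the SDE above enjoys pathwise uniqueness on $[0,T]$, and we have just exhibited a weak solution, so weak solutions are unique in law on $[0,T]$. Applying this construction separately to $(\nu_t^1)_{t \in [0,T]}$ and $(\nu_t^2)_{t \in [0,T]}$ yields two weak solutions $\hat X^1, \hat X^2$ of the \emph{same} SDE, both started at $x$; uniqueness in law gives $\hat{\mathbb{P}}^1 \circ (\hat X^1)^{-1} = \hat{\mathbb{P}}^2 \circ (\hat X^2)^{-1}$ as laws on path space, and comparing time-$t$ marginals yields $\nu_t^1 = \hat{\mathbb{P}}^1 \circ (\hat X^1_t)^{-1} = \hat{\mathbb{P}}^2 \circ (\hat X^2_t)^{-1} = \nu_t^2$ on $\mathcal{B}(\mathbb{R}^d)$ for each $t \in [0,T]$, as required. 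The main obstacle I anticipate is the very first step: confirming that the precise technical and integrability conditions of \cite[Theorem 1.1]{BRS21} are genuinely verifiable in the present low-regularity setting with locally unbounded, non-weakly-differentiable drift. This is exactly the role of Lemma \ref{fundalem}, obtained from \cite[Proposition 2.2]{BRS21} applied to the logarithmic Lyapunov function $V(y) = \ln(1+\|y\|^2)$, and checking that the quadratic-times-logarithmic growth bounds of {\bf (H-2)} feed correctly into that estimate is the delicate point; the remaining steps — the It\^o-process identification via Ikeda--Watanabe and the passage from pathwise uniqueness to uniqueness in law — are by now routine once $\sigma$ is known to have the $H^{1,p}_{loc} \cap C$ regularity secured by Theorem \ref{cholesky}.
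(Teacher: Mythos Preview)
Your proposal is correct and follows essentially the same route as the paper's proof: apply Lemma \ref{fundalem} to verify the hypotheses of \cite[Theorem 1.1]{BRS21}, lift each $(\nu_t^i)$ to a martingale-problem solution, extend via Proposition \ref{prop12}, convert to a weak It\^o--SDE solution using the Cholesky factor from Theorem \ref{cholesky} and Theorem \ref{martiweak}, and then conclude by Theorem \ref{pathwiseuni} together with Yamada--Watanabe. The sequence of reductions, the auxiliary results invoked, and the final identification of marginals all match the paper's argument.
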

\begin{proof}
Let $i \in \{1, 2\}$. By \cite[Theorem 1.1]{BRS21} and Lemma \ref{fundalem}, there exists a filtered probability space $(\bar{\Omega}_T, \mathcal{B}(\Omega_T), (\bar{\mathcal{F}}_{t})_{t \in [0,T]}, \bar{\mathbb{P}}^i_x)$ and a canonical process $(\bar{X}^{(i)}_t)_{t \in [0,T]}$ such that $\bar{\mathbb{P}}_x^i$ solves the martingale problem for $(L, C_0^{\infty}(\mathbb{R}^d))$ up to a finite time $T$ as in Definition \ref{martiprupto}, and satisfies
$$
\bar{\mathbb{P}}_x^i(\bar{X}^{(i)}_t \in E) = \nu^i_t(E) \; \text{ for all $E \in \mathcal{B}(\mathbb{R}^d)$ and $t \in [0, T]$}.
$$
Subsequently,  let $(\tilde{\Omega}^i, \tilde{\mathcal{F}}^i,  (\tilde{\mathcal{F}}^i)_{t \geq 0}, \tilde{\mathbb{P}}^i_x)$ be a filtered probability space and $(\tilde{X}^{(i)}_t)_{t \geq 0}$ be a canonical process as in Proposition \ref{prop12} such that (i)--(ii) in Proposition \ref{prop12} hold.
Then, we have
$$
\tilde{\mathbb{P}}^i_x (\tilde{X}^{(i)}_t \in E)=\bar{\mathbb{P}}_x^i(\bar{X}^{(i)}_t \in E) = \nu^i_t(E) \; \text{ for all $E \in \mathcal{B}(\mathbb{R}^d)$ and $t \in [0, T]$}.
$$
Moreover, by Theorem \ref{martiweak}, 
 there exists an extension $(\hat{\Omega}, \hat{\mathcal{F}}, \hat{\mathbb{P}}_x, 
 (\hat{\mathcal{F}_t})_{t \geq 0}, (\hat{X}_t)_{t \geq 0})$ of \\
 $(\tilde{\Omega}, \tilde{\mathcal{F}}, \tilde{\mathbb{P}}_x, (\tilde{\mathcal{F}_t})_{t \geq 0}, (\tilde{X}_t)_{t \geq 0})$, and there exists an $(\hat{\mathcal{F}}^i_t)_{t \geq 0}$-standard Brownian motion $(\hat{W}^{(i)}_t)_{t \geq 0}$
such that 
$$
\hat{X}^{(i)}_t = x+ \int_0^t \sigma(\hat{X}^{(i)}_s) d \hat{W}^{(i)}_s + \int_0^t \mathbf{G}(\hat{X}^{(i)}_s) ds, \quad 0 \leq t \leq T, \qquad \hat{\mathbb{P}}_x\text{-a.s.}
$$
and that 
$$
\hat{\mathbb{P}}^i_x (\hat{X}^{(i)}_t \in E)=\tilde{\mathbb{P}}^i_x (\tilde{X}^{(i)}_t \in E)=\bar{\mathbb{P}}_x^i(\bar{X}^{(i)}_t \in E) = \nu^i_t(E) \; \text{ for all $E \in \mathcal{B}(\mathbb{R}^d)$ and $t \in [0, T]$}.
$$
Finally, by the consequence of Theorem \ref{pathwiseuni} and the Yamada-Watanabe's theorem in \cite[Chapter 5, Proposition 3.20]{KS91} where $C([0, \infty), \mathbb{R}^d)$ is replaced by $C([0, T], \mathbb{R}^d)$, we discover that
$$
\nu^1_t(E) = \hat{\mathbb{P}}_x^1(\tilde{X}^{(1)}_t \in E)=\hat{\mathbb{P}}_x^2(\tilde{X}^{(2)}_t \in E) = \nu_t^2(E) \quad \text{ for all $t \in [0,T]$ and $E \in \mathcal{B}(\mathbb{R}^d)$}.
$$
This completes the proof.
\end{proof}

\section{Conclusions and discussion} \label{condis}
\noindent
The solutions to the Fokker-Planck equations can be expressed as the one-dimensional marginal distributions of the solutions to stochastic differential equations. This allows probabilistic objects to be interpreted as deterministic objects in the form of solutions to partial differential equations. In this paper, using semigroup theory, Cholesky decomposition and stochastic analysis, we show the existence and uniqueness of solutions to the Cauchy problem for the Fokker-Planck equations associated with a partial differential operator $L$ with highly irregular coefficients. Furthermore, we provided results on the long-time behavior of the solutions, demonstrating their ergodicity.
Compared to prior works such as \cite{Fi08} and \cite{Tr16}, this work distinguishes itself by establishing the well-posedness of the Cauchy problem for the Fokker–Planck equation associated with operators whose drift coefficients are not assumed to be weakly differentiable and may even exhibit locally unbounded singularities that blow up to infinity. 
\\
As highlighted in Remark \ref{growthconrema}, the growth condition expressed as inequalities in {\bf (H-2)} is observed to be slightly stringent for guaranteeing the existence of solutions to the Cauchy problem for the Fokker-Planck equations. Thus, further investigation is needed to determine whether uniqueness can be established under the weaker condition as \eqref{lapunoconser}. This requires additional research into whether the superposition principle developed in \cite{BRS21} can be generalized under the condition \eqref{lapunoconser}  (cf. \cite[Question 8]{BRS23}).\\
Recently, the application of Fokker-Planck equations in Artificial Intelligence, particularly in image generation algorithms, has attracted significant attention (cf. \cite{SS21}). The philosophy of treating images as samples drawn from specific probability distributions is fundamentally rooted in the ideas of Markov Chain Monte Carlo (MCMC) algorithms. This raises several questions about how the family of probability measures expressed as solutions to the Fokker-Planck equations evolves over time, how they converge to a target distribution as time grows large, the rate of convergence, and how to quantify the distance between probability measures to assess this convergence. Notably, recent studies have revealed that when non-symmetricity is imposed on the partial differential operator $L$, the convergence to the stationary probability measure of the Fokker-Planck equations accelerates (cf. \cite{HHS05, MCF15}).  To identify specific conditions that facilitate accelerated convergence, a systematic investigation into the well-posedness of Fokker-Planck equations under more generalized coefficient conditions would be critically important. As a direction for future research, we note that the limiting measure of the solution to the Cauchy problem coincides with the solution to the corresponding stationary Fokker–Planck equation (see Theorem \ref{mainresu}(iii)). Therefore, a systematic study of not only the existence but also the regularity and uniqueness of solutions to the stationary problem is required (see \cite{LT22, L25} for related results). Furthermore, there is growing interest in physics and engineering in Fokker–Planck equations corresponding to Stratonovich type SDEs (see \cite{AGT14}) and in nonlinear Fokker–Planck equations (see \cite{Lima22, BRS19}), both of which are important directions that deserve further investigation.

\text{}\\ \\ \\
\noindent
{\bf Acknowledgment.}\;
The author would like to thank the anonymous reviewers for their valuable comments and suggestions.

\text{}\\

\centerline{}
\centerline{}
\centerline{}
\centerline{}
Haesung Lee\\
Department of Mathematics and Big Data Science,  \\
Kumoh National Institute of Technology, \\
Gumi, Gyeongsangbuk-do 39177, Republic of Korea, \\
E-mail: fthslt@kumoh.ac.kr, \; fthslt14@gmail.com
\end{document}